\newenvironment{proof}[1][Proof]{\begin{trivlist}
\item[\hskip \labelsep {\bfseries #1}]}{\end{trivlist}}
    \newcommand{\qed}{\nobreak \ifvmode \relax \else
          \ifdim\lastskip<1.5em \hskip-\lastskip
          \hskip1.5em plus0em minus0.5em \fi \nobreak
          \vrule height0.75em width0.5em depth0.25em\fi}
\newtheorem{myth}{Theorem}[section]
\newtheorem{mylem}{Lemma}[section]
\newtheorem{mycor}{Corollary}[section]
\newtheorem{mydef}{Definition}[section]
\newtheorem{myrem}{Remark}[section]
\newtheorem{myexam}{Example}[section]
\begin{document}
\date{On the 50$^\textrm{th}$ Anniversary of Obafemi Awolowo University}
\title{Holomorph of generalized Bol loops\footnote{2010 Mathematics Subject
Classification. Primary 20N02, 20NO5}
\thanks{{\bf Keywords and Phrases :} generalized Bol loop, flexibility, holomorph of a loop, Bryant Schneider group, pseudo-automorphism}}
\author{J. O. Ad\'en\'iran \\
Department of Mathematics,\\
Federal University of Agriculture, \\
Abeokuta 110101, Nigeria.\\
ekenedilichineke@yahoo.com\\
adeniranoj@unaab.edu.ng
\and
 T. G. Jaiy\'e\d ol\'a \footnote{corresponding author}\\
Department of Mathematics,\\
Obafemi Awolowo University,\\
Ile Ife 220005, Nigeria.\\
jaiyeolatemitope@yahoo.com\\tjayeola@oauife.edu.ng
\and
K. A. \`{I}d\`{o}w\'{u} \\
Department of Mathematics,\\
Federal University of Agriculture, \\
Abeokuta 110101, Nigeria.\\
idowuka@hotmail.com}
\maketitle
\begin{abstract}
The notion of the holomorph of a generalized Bol loop and generalized flexible-Bol loop are characterized. With the aid of two self-mappings on the holomorph of a loop, it is shown that: the loop is a generalized Bol loop if and only if its holomorph is a generalized Bol loop; the loop is a generalized flexible-Bol loop if and only if its holomorph is a generalized flexible-Bol loop. Furthermore, elements of the Bryant Schneider group of a generalized Bol loop are characterized in terms of pseudo-automorphism, and the automorphisms gotten are used to build the holomorph of the generalized Bol loop.
\end{abstract}
\section{Introduction}
\paragraph{}The birth of Bol loops can be traced back to Gerrit Bol \cite{16} in 1937 when he established the relationship between Bol loops and Moufang loops, the latter which was discovered by Ruth Moufang \cite{19}. Thereafter, a theory of Bol loops evolved through the Ph.D. thesis of Robinson \cite{25} in 1964 where he studied the algebraic properties of Bol loops, Moufang loops and Bruck loops, isotopy of Bol loop and some other notions on Bol loops. Some later results on Bol loops and Bruck loops can be found in \cite{1,2}, \cite{6,16,7,8}, \cite{bur}, \cite{30,31} and \cite{38}

In the 1980s, the study and construction of finite Bol loops caught the attention of many researchers among whom are Burn \cite{bur,bur1,bur2}, Solarin and Sharma \cite{34,32.1,35} and others like Chein and Goodaire \cite{gg22,chein2,chein1}, Foguel at. al. \cite{phd127}, Kinyon and Phillips \cite{phd115,phd121} in the present millennium. One of the most important results in the theory of Bol loops is the solution of the open problem on the existence of a simple Bol loop which was finally laid to rest by Nagy \cite{nagy1,nagy2,nagy3}.

In 1978, Sharma and Sabinin \cite{32,33} introduced and studied the algebraic properties of the notion of half-Bol loops(left B-loops). Thereafter, Adeniran \cite{phd111}, Adeniran and Akinleye \cite{1}, Adeniran and Solarin \cite{3} studied the algebraic properties of generalized Bol loops. Also, Ajmal \cite{5} introduced and studied the algebraic properties of generalized Bol loops and their relationship with M-loops (cf. identity \eqref{eq:8.3}).

Interestingly, Adeniran \cite{phd79}, \cite{7}, \cite{phd40}, \cite{phd80}, \cite{phd44}, \cite{25,phd7} are devoted to study the holomorphs of Bol loops,
conjugacy closed loops, inverse property loops, A-loops, extra loops, weak inverse property loops and Bruck loops.

The Bryant-Schneider group of a loop was introduced by Robinson \cite{phd93}, based on the motivation of \cite{phd92}. Since the advent of the
Bryant-Schneider group, some studies by Adeniran \cite{phd142} and
Chiboka \cite{phd94} have been done on it relative to CC-loops and
extra loops.

The objectives of this present work are to study the structure of the holomorph of a generalized Bol loop and generalized flexible Bol loop, and also to characterize elements of the Bryant-Schneider group of a generalized Bol loop (generalized flexible Bol loop) and use this elements to build the holomorph of a generalized Bol loop (generalized flexible Bol loop).

\section{Preliminaries}
\paragraph{}
Let $L$ be a non-empty set. Define a binary operation ($\cdot $) on
$L$ : If $x\cdot y\in L$ for all $x, y\in L$, $(L, \cdot )$ is
called a groupoid. If the equations:
\begin{displaymath}
a\cdot x=b\qquad\textrm{and}\qquad y\cdot a=b
\end{displaymath}
have unique solutions for $x$ and $y$ respectively, then $(L, \cdot
)$ is called a quasigroup. For each $x\in L$, the elements $x^\rho
=xJ_\rho\in L$ and $x^\lambda =xJ_\lambda\in L$ such that
$xx^\rho=e^\rho$ and $x^\lambda x=e^\lambda$ are called the right
and left inverse elements of $x$ respectively. Here, $e^\rho\in L$
and $e^\lambda\in L$ satisfy the relations $xe^\rho =x$ and
$e^\lambda x=x$ for all $x\in L$ and are respectively called the
right and left identity elements. Now, if $e^\rho=e^\lambda=e\in L$, then $e$ is called the identity element and $(L, \cdot )$ is called a loop. In case $x^\lambda =x^\rho$, then, we simply write $x^\lambda =x^\rho =x^{-1}=xJ$ and refer to $x^{-1}$ as the inverse of $x$.

Let $x$ be an arbitrarily fixed element in a loop $(G, \cdot )$. For any $y\in G$, the left and
right translation maps of $x\in G$, $L_x$ and $R_x$ are respectively
defined by
\begin{displaymath}
yL_x=x\cdot y\qquad\textrm{and}\qquad yR_x=y\cdot x
\end{displaymath}
A loop $(L,\cdot)$ is called
a (right) Bol loop if it satisfies the identity
\begin{equation}\label{eq:7}
(xy\cdot z)y=x(yz\cdot y)
\end{equation}
A loop $(L,\cdot)$ is called
a left Bol loop if it satisfies the identity
\begin{equation}\label{eq:7.1}
y(z\cdot yx)=(y\cdot zy)x
\end{equation}
A loop $(L,\cdot)$ is called
a Moufang loop if it satisfies the identity
\begin{equation}\label{eq:7.2}
(xy)\cdot (zx)=(x\cdot yz)x
\end{equation}
A loop $(L,\cdot)$ is called a right inverse property loop (RIPL) if $(L,\cdot)$ satisfies right inverse property (RIP)
\begin{equation}\label{eq:9}
(yx)x^{\rho}=y
\end{equation}
A loop $(L,\cdot)$ is called a left inverse property loop (LIPL) if $(L,\cdot)$ satisfies left inverse property (LIP)
\begin{equation}\label{eq:9.1}
x^\lambda(xy)=y
\end{equation}
A loop $(L,\cdot)$ is called an automorphic inverse property loop (AIPL) if $(L,\cdot)$ satisfies automorphic inverse property (AIP)
\begin{equation}\label{eq:10}
(xy)^{-1}=x^{-1}y^{-1}
\end{equation}

A loop $(L,\cdot)$ in which the mapping $x\mapsto x^2$ is a permutation, is called a Bruck loop if it is both a Bol loop and either AIPL or obeys the identity $xy^2\cdot x=(yx)^2$. (Robinson \cite{25})

Let $(L,\cdot)$ be a loop with a single valued self-map $\sigma:x\longrightarrow\sigma(x)$:

$(L,\cdot )$ is called a generalized (right) Bol loop or right B-loop if it satisfies
the identity
\begin{equation}\label{eq:8}
(xy\cdot z)\sigma(y)=x(yz\cdot\sigma(y))
\end{equation}
$(L,\cdot )$ is called a generalized left Bol loop or left B-loop if it satisfies
the identity
\begin{equation}\label{eq:8.1}
\sigma(y)(z\cdot yx)=(\sigma(y)\cdot zy)x
\end{equation}
$(L,\cdot )$ is called an M-loop if it satisfies
the identity
\begin{equation}\label{eq:8.3}
(xy)\cdot (z\sigma(x))=(x\cdot yz)\sigma(x)
\end{equation}

Let $(G,\cdot )$ be a groupoid(quasigroup, loop) and let $A,B$ and $C$ be three bijective
mappings, that map $G$ onto $G$. The triple $\alpha =(A,B,C)$ is
called an autotopism of $(G,\cdot )$ if and only if
\begin{displaymath}
xA\cdot yB=(x\cdot y)C~\forall~x,y\in G.
\end{displaymath}
Such triples form a group
$AUT(G,\cdot )$ called the autotopism group of $(G,\cdot )$.

If $A=B=C$, then $A$ is called an automorphism of the
groupoid(quasigroup, loop) $(G,\cdot )$. Such bijections form a
group $AUM(G,\cdot )$ called the automorphism group of $(G,\cdot )$.

The right nucleus of $(L,\cdot)$ is defined by $N_\rho (L,\cdot)=\{x\in L~|~zy\cdot x=z\cdot yx~\forall~y,z\in L\}$.

\begin{mydef}
Let $(Q,\cdot)$ be a loop and $A(Q)\le AUM(Q,\cdot)$
be a group of automorphisms of the loop $(Q,\cdot)$. Let $H=A(Q)\times Q$. Define $\circ$ on $H$ as
\begin{displaymath}
(\alpha,x)\circ(\beta,y)=(\alpha\beta,x\beta\cdot y)~\textrm{for all}~(\alpha,x),(\beta,y)\in H.
\end{displaymath}
$(H,\circ)$ is a loop and is called the A-holomorph of $(Q,\cdot)$.
\end{mydef}
The left and right translations maps of an element $(\alpha,x)\in H$ are respectively
denoted by $\mathbb{L}_{(\alpha,x)}$ and $\mathbb{R}_{(\alpha,x)}$.

\begin{myrem}
$(H,\circ)$ has a subloop $\{I\}\times Q$ that is isomorphic to $(Q,\cdot)$. As observed in Lemma 6.1 of Robinson \cite{25}, given a loop $(Q,\cdot)$ with an A-holomorph $(H,\circ )$, $(H,\circ )$ is a Bol loop if and only if $(Q,\cdot)$ is a $\theta$-generalized Bol loop for all $\theta\in A(Q)$. Also in Theorem 6.1 of Robinson \cite{25}, it was shown that $(H,\circ )$ is a Bol loop if and only if $(Q,\cdot)$ is a Bol loop and $x^{-1}\cdot x\theta\in N_\rho(Q,\cdot)$ for all $\theta\in A(Q)$.
\end{myrem}

\begin{mydef}
Let $(Q,\cdot)$ be a loop with a single valued self-map $\sigma$ and let $(H,\circ)$ be the A-holomorph of $(Q,\cdot)$ with single valued self-map $\sigma'$. $(Q,\cdot)$ is called a $\sigma$-flexible loop ($\sigma$-flexible) if
$$xy\cdot \sigma(x\delta)=x\cdot y\sigma(x\delta)~\textrm{for all}~x,y\in Q~\textrm{and some}~\delta\in A(Q).$$
$(H,\circ )$ is called a $\sigma'$-flexible loop ($\sigma'$-flexible) if
$$(\alpha,x)(\beta,y)\circ \sigma'(\alpha,x)=(\alpha,x)\circ (\beta,y)\sigma'(\alpha,x)~\textrm{for all}~(\alpha,x),(\beta,y)\in H.$$
If a loop is both a $\sigma$-generalised
Bol loop and a $\sigma$-flexible loop, then it is called a $\sigma$-generalised flexible-Bol loop.
\end{mydef}

If in this triple $(A,B,C)\in AUT(G,\cdot )$, $B=C=AR_c$, then A is called a pseudo-automorphism of a quasigroup $(G,\cdot )$ with companion $c\in G$. Such bijections form a
group $PS(G,\cdot )$ called the pseudo-automorphism group of $(G,\cdot )$.

\begin{mydef}\label{a1:3}(Robinson \cite{phd93})

Let $(G,\cdot )$ be a loop with symmetric group $SYM(G)$.
A mapping $\theta\in SYM(G)$ is called a
special map for $G$ if there exist $f,g\in G$ so that
$(\theta R_g^{-1},\theta L_f^{-1},\theta )\in AUT(G,\cdot )$.
\end{mydef}
\begin{myth}\label{a1:3l}(Robinson \cite{phd93})

Let $(G,\cdot )$ be a loop with symmetric group $SYM(G)$. The set of all special maps in $(G,\cdot )$ i.e.
$$BS(G,\cdot )=\{\theta\in SYM(G,\cdot )~:~\exists~f,g\in G~\ni~(\theta R_g^{-1},\theta L_f^{-1},\theta )\in AUT(G,\cdot
)\}$$ is a subgroup of $SYM(G)$ and is called the Bryant-Schneider group of the loop
$(G,\cdot )$.
\end{myth}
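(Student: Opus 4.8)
The plan is to recognize $BS(G,\cdot)$ as the image of the autotopism group $AUT(G,\cdot)$ under projection onto the third coordinate, and then to invoke the elementary fact that the homomorphic image of a group is a subgroup. The crucial preliminary step I would establish is a reduction lemma: a permutation $\theta\in SYM(G)$ is a special map if and only if there exist bijections $A,B$ with $(A,B,\theta)\in AUT(G,\cdot)$, i.e. iff $\theta$ occurs as the \emph{third} component of some autotopism. One direction is immediate from Definition \ref{a1:3}. For the converse, given $(A,B,\theta)\in AUT(G,\cdot)$ I would substitute $y=e$ and then $x=e$ into the defining relation $xA\cdot yB=(xy)\theta$. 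Setting $g:=eB$, the choice $y=e$ yields $xA\cdot g=x\theta$, that is $AR_g=\theta$; since $(G,\cdot)$ is a loop, $R_g$ is a bijection, so $A=\theta R_g^{-1}$. Symmetrically, setting $f:=eA$, the choice $x=e$ gives $f\cdot yB=y\theta$, i.e. $BL_f=\theta$, so $B=\theta L_f^{-1}$. Hence $(A,B,\theta)=(\theta R_g^{-1},\theta L_f^{-1},\theta)$ and $\theta$ is special. This identifies $BS(G,\cdot)$ with $\pi_3\big(AUT(G,\cdot)\big)$, where $\pi_3(A,B,C)=C$.

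Next I would check that $\pi_3$ is a group homomorphism from $AUT(G,\cdot)$ into $SYM(G)$. Composition in $AUT(G,\cdot)$ is componentwise: from $xA_1\cdot yB_1=(xy)C_1$ and the second autotopism applied to the pair $(xA_1,yB_1)$ one obtains $(A_1,B_1,C_1)(A_2,B_2,C_2)=(A_1A_2,B_1B_2,C_1C_2)$, so $\pi_3$ is multiplicative and sends the identity autotopism $(I,I,I)$ to $I$. Consequently $\pi_3\big(AUT(G,\cdot)\big)$ is a subgroup of $SYM(G)$, and by the reduction lemma this image is precisely $BS(G,\cdot)$, which completes the argument.

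If one prefers to avoid the abstract projection language, the same reduction lemma makes a direct verification of the subgroup axioms painless, and I would present it as an alternative. Non-emptiness follows from $f=g=e$, which gives $(I,I,I)\in AUT(G,\cdot)$ and hence $I\in BS(G,\cdot)$. For closure, if $\theta_1,\theta_2\in BS(G,\cdot)$ then the product of their witnessing autotopisms is an autotopism with third component $\theta_1\theta_2$; for inverses, the group-inverse of a witnessing autotopism has third component $\theta^{-1}$. In each case the reduction lemma automatically supplies companions $f,g$ realising the required form $(\theta R_g^{-1},\theta L_f^{-1},\theta)$, so both $\theta_1\theta_2$ and $\theta^{-1}$ lie in $BS(G,\cdot)$.

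The main obstacle — really the only place where the loop hypothesis is genuinely used — is the reduction lemma, specifically the claim that the companions are forced and recoverable as $g=eB$ and $f=eA$. This rests on $e$ being a two-sided identity and on the translations $R_g,L_f$ being invertible, which is exactly the quasigroup/loop content; everything downstream is formal group theory. I would therefore be most careful to state the loop axioms being invoked at that point and to confirm that $R_g$ and $L_f$ are bijections for every choice of $g,f\in G$.
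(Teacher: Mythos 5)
Your proposal is correct and complete. Note that the paper itself gives no proof of this theorem: it is quoted as background from Robinson \cite{phd93}, so there is no internal argument to compare against; your route --- the reduction lemma showing that any autotopism $(A,B,\theta)\in AUT(G,\cdot)$ is forced to have the form $(\theta R_g^{-1},\theta L_f^{-1},\theta)$ with $g=eB$, $f=eA$ (obtained by setting $y=e$, then $x=e$, in $xA\cdot yB=(xy)\theta$), followed by identifying $BS(G,\cdot)$ with the image of the third-coordinate projection of the group $AUT(G,\cdot)$ --- is the standard proof and essentially Robinson's original one. You also correctly isolate where the loop structure enters: the two-sided identity $e$ and the bijectivity of the translations $R_g$, $L_f$, which make the companions recoverable; everything else is formal group theory, and your fallback direct verification of the subgroup axioms (identity via $f=g=e$, closure and inverses via componentwise products and inverses of witnessing autotopisms) is equally sound.
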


Some existing results on generalized Bol loops and generalized Moufang loops are highlighted below.
\begin{myth}(Adeniran and Akinleye \cite{1})\label{1}

If $(L,\cdot )$ is a generalized Bol
loop, then:
\begin{enumerate}
\item $(L,\cdot)$ is a RIPL.
\item $x^{\lambda}=x^{\rho}$ for all $x\in L$.
\item $R_{y\cdot \sigma(y)}=R_yR_{\sigma(y)}$ for all $y\in L$.
\item $[xy\cdot\sigma(x)]^{-1}=(\sigma(x))^{-1}y^{-1}\cdot x^{-1}$ for all $x,y\in L$.
\item $(R_{y^{-1}}, L_yR_{\sigma(y)},
R_{\sigma(y)}),(R_y^{-1}, L_yR_{\sigma(y)},
R_{\sigma(y)})\in AUT(L,\cdot )$ for all $y\in L$.
\end{enumerate}
\end{myth}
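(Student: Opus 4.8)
The plan is to extract all five assertions from the defining identity \eqref{eq:8} by single, well-chosen substitutions followed by cancellation of bijective translations, proving them in the logically convenient order $(3),(1),(2),(5),(4)$ rather than as listed. Throughout I would use that in a loop each $L_x$ and each $R_x$ is a bijection, and that $\sigma(y)\in L$ so $R_{\sigma(y)}$ is invertible. Assertion $(3)$ comes for free: setting $z=e$ in \eqref{eq:8} and using $xy\cdot e=xy$ and $ye=y$ collapses the identity to $(xy)\sigma(y)=x\big(y\sigma(y)\big)$, i.e. $R_yR_{\sigma(y)}=R_{y\sigma(y)}$, which is exactly $(3)$.

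The crux is $(1)$. Here I would substitute $z=y^{\rho}$ in \eqref{eq:8}. On the right, $yy^{\rho}=e$ forces $yz\cdot\sigma(y)=\sigma(y)$, so the right-hand side collapses to $x\sigma(y)$, giving $(xy\cdot y^{\rho})\sigma(y)=x\sigma(y)$ for all $x$. Reparametrising by $u=xy$ (a bijection in $x$, with $x=uR_y^{-1}$) and writing both sides with translation maps yields $R_{y^{\rho}}R_{\sigma(y)}=R_y^{-1}R_{\sigma(y)}$; right-cancelling the bijection $R_{\sigma(y)}$ produces the operator identity $R_{y^{\rho}}=R_y^{-1}$, and composing with $R_y$ gives $R_yR_{y^{\rho}}=I$, i.e. $(uy)y^{\rho}=u$, the right inverse property. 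I expect this to be the main obstacle, not because the computation is long but because it is the linchpin on which $(2)$, $(5)$ and $(4)$ all rest, and because the decisive cancellation genuinely uses bijectivity of $R_{\sigma(y)}$ rather than any cancellation law internal to $L$.

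Assertion $(2)$ then falls out by evaluating the operator identity $R_{y^{\rho}}=R_y^{-1}$ at $e$: the left side gives $ey^{\rho}=y^{\rho}$, while $eR_y^{-1}$ is the unique solution $w$ of $wy=e$, namely $y^{\lambda}$, so $y^{\rho}=y^{\lambda}$ and one may write $y^{-1}$ unambiguously. For $(5)$ I would rewrite \eqref{eq:8} as $x\big((yz)\sigma(y)\big)=\big((xy)z\big)\sigma(y)$ and substitute $x=ay^{-1}$; by $(1)$ we have $(ay^{-1})y=a$, so with $z=b$ the identity becomes $(ay^{-1})\cdot\big((yb)\sigma(y)\big)=(ab)\sigma(y)$, which is precisely the statement that $(R_{y^{-1}},L_yR_{\sigma(y)},R_{\sigma(y)})\in AUT(L,\cdot)$ (the middle map is a composite of bijections, hence a bijection). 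Since $(1)$ gives $R_{y^{-1}}=R_y^{-1}$, the second triple $(R_y^{-1},L_yR_{\sigma(y)},R_{\sigma(y)})$ is literally the same autotopism, so both memberships hold at once.

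Finally I would deduce $(4)$ from the autotopism of $(5)$ with the index renamed $y\to x$, namely $(ax^{-1})\cdot\big((xb)\sigma(x)\big)=(ab)\sigma(x)$. Taking $b=y$ and the choice $a=\sigma(x)^{-1}y^{-1}$ makes the right side $\big(\sigma(x)^{-1}y^{-1}\cdot y\big)\sigma(x)=\sigma(x)^{-1}\sigma(x)=e$ (using $(1)$ and $(2)$), so the left side shows that $\big(\sigma(x)^{-1}y^{-1}\big)x^{-1}$ is a left inverse of $xy\cdot\sigma(x)$. By $(2)$ inverses are two-sided and unique, hence this left inverse is the inverse, giving $[xy\cdot\sigma(x)]^{-1}=(\sigma(x))^{-1}y^{-1}\cdot x^{-1}$, as claimed.
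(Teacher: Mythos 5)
Your proof is correct in all five parts. There is, however, nothing in the paper to compare it against: Theorem~\ref{1} is imported as background from Adeniran and Akinleye \cite{1}, and the paper never reproduces its proof, so your argument stands or falls on its own merits. On those merits it is sound and complete. Item (3) by $z=e$ in \eqref{eq:8} is immediate; item (1) by $z=y^\rho$ (using that in a loop $yy^\rho=e$, since $e^\rho=e$) and cancellation of the bijection $R_{\sigma(y)}$ gives the operator identity $R_{y^\rho}=R_y^{-1}$, which is the RIP; item (2) follows by evaluating that identity at $e$; item (5) follows by the substitution $x\mapsto ay^{-1}$ in \eqref{eq:8} together with $R_{y^{-1}}R_y=I$, and the observation that the two listed triples coincide because $R_{y^{-1}}=R_y^{-1}$; and item (4) follows from that autotopism with $a=\sigma(x)^{-1}y^{-1}$, $b=y$, plus uniqueness of the solution of $wv=e$ in a quasigroup and item (2) to identify the left inverse with the two-sided inverse. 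Two minor remarks: first, your ordering $(3),(1),(2),(5),(4)$ is indeed the natural logical order, since everything hinges on the RIP; second, your derivation of (5) is essentially a re-proof of the forward direction of Lemma~\ref{alpha2} (Adeniran), which is the form of this result that the paper actually relies on in its main theorems, so your write-up makes explicit a dependency the paper leaves to the references.
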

\begin{myth}(Sharma and Sabinin \cite{32})\label{2}

If $(L,\cdot )$ is a half Bol loop, then:
\begin{enumerate}
\item $(L,\cdot)$ is a LIPL.
\item $x^{\lambda}=x^{\rho}$ for all $x\in L$.
\item $L_{(x)}L_{(\sigma(x))} =
L_{(\sigma(x)x)}$ for all $x\in L$.
\item $(\sigma(x)\cdot yx)^{-1} = x^{-1}\cdot y^{-1} (\sigma(x))^{-1}$ for all $x,y\in L$.
\item $(R_{(x)}L_{(\sigma(x))},
L_{(x)^{-1}}, L_{(\sigma(x))}),(R_{(\sigma(x))}L_{(x)^{-1}}, L_{\sigma(x)}, L_{(x)^{-1}})\in AUT(L,\cdot )$ for all $x\in L$.
\end{enumerate}
\end{myth}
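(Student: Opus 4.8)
The plan is to reduce the whole statement to Theorem \ref{1} by passing to the opposite loop. Writing $x*y:=y\cdot x$ for the opposite operation on $L$, a direct rewriting of the defining identity \eqref{eq:8.1} shows that $(L,\cdot)$ is a half Bol loop (left $B$-loop) if and only if $(L,*)$ is a $\sigma$-generalized (right) Bol loop for the \emph{same} self-map $\sigma$: substituting $a\cdot b=b*a$ throughout \eqref{eq:8.1} turns it verbatim into $((x*y)*z)*\sigma(y)=x*((y*z)*\sigma(y))$, which is exactly \eqref{eq:8} read in $(L,*)$. The only work here is bookkeeping, so I would first record the translation dictionary: the translations satisfy $L^{*}_{x}=R_{x}$ and $R^{*}_{x}=L_{x}$; the one-sided inverses swap, $x^{\lambda_{*}}=x^{\rho}$ and $x^{\rho_{*}}=x^{\lambda}$; and a triple $(A,B,C)$ lies in $AUT(L,*)$ if and only if $(B,A,C)$ lies in $AUT(L,\cdot)$, since $uA*vB=(u*v)C$ says $vB\cdot uA=(v\cdot u)C$.

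With this dictionary each conclusion is the image of the corresponding item of Theorem \ref{1} applied to $(L,*)$. Part 1 is the RIP of $(L,*)$ (Theorem \ref{1}(1)) read back as the LIP of $(L,\cdot)$; Part 2 is Theorem \ref{1}(2), since $x^{\lambda_*}=x^{\rho_*}$ becomes $x^{\rho}=x^{\lambda}$; Part 3 is Theorem \ref{1}(3), as $R^{*}_{y*\sigma(y)}=R^{*}_{y}R^{*}_{\sigma(y)}$ translates to $L_{\sigma(y)\cdot y}=L_{y}L_{\sigma(y)}$; and Part 4 is the translation of the inverse formula Theorem \ref{1}(4), using that the two-sided inverse of $(L,*)$ coincides with that of $(L,\cdot)$ by Part 2. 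Transporting the autotopism of Theorem \ref{1}(5) through $(A,B,C)_{*}\mapsto(B,A,C)_{\cdot}$ and $R^{*}\leftrightarrow L$ produces the first triple $(R_{x}L_{\sigma(x)},L_{x^{-1}},L_{\sigma(x)})\in AUT(L,\cdot)$ of Part 5.

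For a self-contained derivation the same facts can be extracted directly. Viewing \eqref{eq:8.1} as an identity in the single variable $x$ yields the operator form $L_{y}L_{z}L_{\sigma(y)}=L_{\sigma(y)\cdot zy}$ for all $y,z$; setting $z=e$ gives Part 3 at once. For the autotopism I would put $y=x$, $z=u$ and replace the free variable by $x^{-1}v$: once LIP is available, $x\cdot(x^{-1}v)=v$ collapses the left-hand side and \eqref{eq:8.1} becomes $\sigma(x)(ux)\cdot x^{-1}v=\sigma(x)(uv)$, i.e.\ $(R_{x}L_{\sigma(x)},L_{x^{-1}},L_{\sigma(x)})\in AUT(L,\cdot)$. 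Part 4 then follows by applying this autotopism to inverse elements together with the inverse property.

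The main obstacle is twofold. First, in a direct treatment one must secure the inverse property (Part 1) before the cancellations above are legal, and the self-map $\sigma$ obstructs the naive substitutions (forcing $yx=e$ entangles $\sigma(y)$ with the one-sided inverses), so the cleanest route really is to borrow LIP from the opposite-loop form of Theorem \ref{1}(1). Second, and more delicate, is the \emph{second} autotopism in Part 5: unlike the right-Bol situation of Theorem \ref{1}(5), where the two listed triples coincide as soon as $R_{x^{-1}}=R_{x}^{-1}$, here the two triples are genuinely different, so the second cannot be read off from the first by the inverse property alone. I would derive it from the companion rearrangement of \eqref{eq:8.1} together with the inverse formula of Part 4, and I expect verifying that the $\sigma$-twist cancels consistently to be the step demanding the most care.
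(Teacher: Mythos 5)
The paper itself offers no proof of this statement: Theorem~\ref{2} is quoted as a known preliminary result of Sharma and Sabinin \cite{32}, so your proposal has to stand on its own merits. Most of it does. The opposite-loop reduction is sound: with $x*y=y\cdot x$, identity \eqref{eq:8.1} for $(L,\cdot)$ is exactly \eqref{eq:8} for $(L,*)$ with the same $\sigma$; your dictionary $L^{*}_{x}=R_{x}$, $R^{*}_{x}=L_{x}$, $x^{\lambda_{*}}=x^{\rho}$, $x^{\rho_{*}}=x^{\lambda}$, and $(A,B,C)\in AUT(L,*)\Leftrightarrow(B,A,C)\in AUT(L,\cdot)$ is correct; and items 1--4 together with the \emph{first} triple of item 5 do fall out of Theorem~\ref{1} applied to $(L,*)$ exactly as you describe. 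Your direct substitution argument for the first autotopism is also fine once LIP and two-sidedness of inverses are secured.

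The gap is precisely the step you flagged but only promised to carry out: the second triple of item 5. In fact your plan cannot be completed for the statement as printed, because that triple is false in general. Every group is a half Bol loop for \emph{every} self-map $\sigma$ (both sides of \eqref{eq:8.1} collapse by associativity), yet $(R_{\sigma(x)}L_{x^{-1}},L_{\sigma(x)},L_{x^{-1}})\in AUT(L,\cdot)$, evaluated on the pair $(e,e)$, forces $(x^{-1}\sigma(x))\cdot\sigma(x)=x^{-1}$, i.e.\ $(\sigma(x))^{2}=e$; this already fails in $(\mathbb{Z}_3,+)$ with $\sigma$ the identity map. What the hypotheses actually yield is the triple with $(\sigma(x))^{-1}$ in the first slot. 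Concretely: since $(L,\cdot)$ is a LIPL, the mirror of Lemma~\ref{alpha3} holds --- $(U,V,W)\in AUT(L,\cdot)$ implies $(JUJ,W,V)\in AUT(L,\cdot)$ --- and it stays inside your framework, being Lemma~\ref{alpha3} applied to the RIPL $(L,*)$ and translated back. Applying this to the first triple $(R_{x}L_{\sigma(x)},L_{x^{-1}},L_{\sigma(x)})$ and simplifying $JR_{x}L_{\sigma(x)}J$ via item 4 (with $y=u^{-1}$, so $(\sigma(x)\cdot u^{-1}x)^{-1}=x^{-1}\cdot u(\sigma(x))^{-1}$) gives $(R_{(\sigma(x))^{-1}}L_{x^{-1}},L_{\sigma(x)},L_{x^{-1}})\in AUT(L,\cdot)$. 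So the printed $R_{(\sigma(x))}$ is evidently a misprint for $R_{(\sigma(x))^{-1}}$, and the two-step route just described (LIP-mirror of Lemma~\ref{alpha3}, then item 4) finishes the corrected claim with no delicate $\sigma$-cancellation at all; your review of the problem spot was right, but your proposal neither completes it nor detects that the obstruction is the statement itself.
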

\begin{myth}(Ajmal \cite{5})\label{3}

Let $(L,\cdot )$ be a loop. The following statements are equivalent:
\begin{enumerate}
\item $(L,\cdot)$ is a M-loop;
\item $(L,\cdot)$ is both a left B-loop and a right B-loop;
\item $(L,\cdot)$ is a right B-loop and satisfies the LIP;
\item $(L,\cdot)$ is a left B-loop and satisfies the RIP.
\end{enumerate}
\end{myth}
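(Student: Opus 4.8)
The plan is to route the four equivalences through the anti-automorphic inverse property that every inverse-property (IP) loop enjoys, treating the single-valued map $\sigma$ as the only obstruction to a clean left/right symmetry. First I would record the two elementary implications coming directly from the earlier theorems: by Theorem \ref{2}(1) a left B-loop satisfies the LIP and by Theorem \ref{1}(1) a right B-loop satisfies the RIP, so $(2)\Rightarrow(3)$ and $(2)\Rightarrow(4)$ are immediate, and moreover each of $(2),(3),(4)$ forces the loop to have both the LIP and the RIP, hence to be an IP loop with two-sided inverse $x^{-1}$ and anti-automorphic inverse $(uv)^{-1}=v^{-1}u^{-1}$.

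Next I would set up the inverse-duality engine. In an IP loop the map $J\colon x\mapsto x^{-1}$ is an involution with $JR_aJ=L_{a^{-1}}$ and $JL_aJ=R_{a^{-1}}$, and one checks that $(A,B,C)\in AUT(L,\cdot)$ implies $(JBJ,JAJ,JCJ)\in AUT(L,\cdot)$. Conjugating the right B-loop autotopism $(R_{y^{-1}},L_yR_{\sigma(y)},R_{\sigma(y)})$ of Theorem \ref{1}(5) by $J$ produces $(R_{y^{-1}}L_{\sigma(y)^{-1}},L_y,L_{\sigma(y)^{-1}})$; equivalently, taking inverses of the right B-loop identity \eqref{eq:8} term by term yields exactly the left B-loop identity \eqref{eq:8.1}, but with $\sigma$ replaced by the map $\tilde\sigma\colon b\mapsto \sigma(b^{-1})^{-1}$. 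Dually, inverting \eqref{eq:8.1} returns \eqref{eq:8} with $\sigma$ replaced by $\tilde\sigma$. Thus, granting the compatibility $\sigma=\tilde\sigma$, the identities \eqref{eq:8} and \eqref{eq:8.1} are equivalent for an IP loop, which upgrades $(3)\Rightarrow(2)$ and $(4)\Rightarrow(2)$ and closes the sub-cycle $(2)\Leftrightarrow(3)\Leftrightarrow(4)$.

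It then remains to prove $(1)\Leftrightarrow(2)$. For $(1)\Rightarrow(2)$ I would first extract the inverse properties from the M-loop identity \eqref{eq:8.3} --- specialising $y=e$ already gives the flexibility $x(z\sigma(x))=(xz)\sigma(x)$ --- and then specialise and recombine \eqref{eq:8.3} to split it into the two one-sided identities \eqref{eq:8} and \eqref{eq:8.1}, translating everything into the operator identities $R_{\sigma(x)}L_{xy}=L_yL_xR_{\sigma(x)}$, $L_{xy}R_{\sigma(y)}=L_yR_{\sigma(y)}L_x$ and $R_{yx}L_{\sigma(y)}=R_yL_{\sigma(y)}R_x$ to keep the translations aligned. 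For $(2)\Rightarrow(1)$, having both \eqref{eq:8} and \eqref{eq:8.1} I would compose the two Bol autotopisms (as in the classical proof that a loop which is simultaneously left and right Bol is Moufang), using the flexibility that the pair supplies to collapse the product into the single M-loop operator identity.

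The step I expect to be the real obstacle is the compatibility $\sigma(x^{-1})=\sigma(x)^{-1}$: all of the inverse-duality arguments above convert a one-sided identity into the opposite-handed identity only for the twisted map $\tilde\sigma$, whereas the theorem insists on the \emph{same} $\sigma$ throughout. In the classical Bol/Moufang case ($\sigma=I$) this holds trivially, so nothing has to be checked; for general $\sigma$ I would prove it as a separate lemma, most plausibly by arguing that the self-map is uniquely determined by each defining identity (for fixed $y$, \eqref{eq:8} pins down $\sigma(y)$, and \eqref{eq:8.1} pins it down as well) together with the nuclear relation $R_{y\sigma(y)}=R_yR_{\sigma(y)}$ of Theorem \ref{1}(3) and the inverse formula of Theorem \ref{1}(4); reconciling the two determinations forces $\tilde\sigma=\sigma$. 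Once this compatibility lemma is in hand, the remaining computations are the routine autotopism manipulations sketched above.
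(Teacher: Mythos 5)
First, a point of order: the paper offers no proof of this theorem at all --- it is quoted as background from Ajmal \cite{5} --- so your attempt can only be judged on its own merits. On those merits it fails, and precisely at the step you yourself flagged as the real obstacle. The compatibility condition $\sigma(x^{-1})=(\sigma(x))^{-1}$ is not a consequence of any of the hypotheses $(1)$--$(4)$, so no lemma can deliver it. Take any group with the constant self-map $\sigma\equiv c$ where $c^2\neq e$ (e.g.\ the cyclic group of order $3$ with $c$ a generator): by associativity the identities \eqref{eq:8}, \eqref{eq:8.1} and \eqref{eq:8.3} hold for \emph{every} self-map, so all four statements of the theorem hold simultaneously and the loop is an IP loop, and yet $\sigma(x^{-1})=c\neq c^{-1}=(\sigma(x))^{-1}$. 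The same example destroys your proposed proof of the lemma: identity \eqref{eq:8} does not ``pin down'' $\sigma(y)$ for fixed $y$, since in a group every choice of $\sigma(y)$ satisfies it. With the compatibility lemma gone, your arguments for $(3)\Rightarrow(2)$ and $(4)\Rightarrow(2)$ only produce the opposite-handed identity for the twisted map $\tilde\sigma:x\mapsto(\sigma(x^{-1}))^{-1}$, not for $\sigma$ itself, and the whole architecture collapses.

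The trouble is structural: the two-sided duality $(A,B,C)\mapsto(JBJ,JAJ,JCJ)$ conjugates every component, so $\sigma$ is inevitably twisted. The standard escape, which the paper itself hands you as Lemma \ref{alpha3}, is to use the one-sided transfer lemmas: in a RIPL, $(U,V,W)\in AUT(L,\cdot)$ implies $(W,JVJ,U)\in AUT(L,\cdot)$, and dually in a LIPL it implies $(JUJ,W,V)\in AUT(L,\cdot)$. Here $J$ touches only one component, and for the generalized Bol autotopisms that component is exactly the one free of $\sigma$. For instance, for $(3)\Rightarrow(1)$: by Lemma \ref{alpha2}, statement $(3)$ gives $(R_x^{-1},L_xR_{\sigma(x)},R_{\sigma(x)})\in AUT(L,\cdot)$; the loop is an IP loop (RIP from Theorem \ref{1}, LIP by hypothesis), so the LIP transfer yields
$(JR_x^{-1}J,\,R_{\sigma(x)},\,L_xR_{\sigma(x)})=(L_x,\,R_{\sigma(x)},\,L_xR_{\sigma(x)})\in AUT(L,\cdot)$,
because $JR_x^{-1}J=L_{x^{-1}}^{-1}=L_x$ in an IP loop. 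This last triple is exactly the autotopism form of the M-loop identity \eqref{eq:8.3}, with the \emph{same} $\sigma$, untouched; no compatibility hypothesis ever enters. The remaining implications are handled by the same device applied to the autotopisms of Theorem \ref{1}(5) and Theorem \ref{2}(5), always arranging for $J$ to land on the $\sigma$-free component. That one-sided mechanism is the idea your proposal is missing, and it cannot be replaced by the compatibility lemma, which is provably false.
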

\begin{myth}(Ajmal \cite{5})\label{4}

Every isotope of a right B-loop with the LIP is a right B-loop.
\end{myth}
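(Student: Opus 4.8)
The plan is to reduce to principal isotopes and then to exploit the fact that, under the left inverse property, the hypothesis is far stronger than it looks. First I would invoke the standard reduction (Albert): every isotope of a loop is isomorphic to a principal isotope, and since both the generalized right Bol identity \eqref{eq:8} and its accompanying self-map transport across isomorphisms, it suffices to treat a principal isotope $(L,\circ)$ of $(L,\cdot)$ given by $x\circ y=xR_b^{-1}\cdot yL_a^{-1}$ for fixed $a,b\in L$. The conceptual core is Theorem \ref{3}: a right B-loop satisfying the LIP is an M-loop, and hence is \emph{simultaneously} a left and a right B-loop enjoying both the RIP and the LIP. This is exactly the extra symmetry that general Bol loops lack; indeed ordinary Bol loops are not isotopically invariant, so the LIP hypothesis is not decorative but is precisely what repairs that failure.

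Next I would pass to the autotopism group. By Theorem \ref{1}(5), $(L,\cdot)$ is a right B-loop precisely when
\[ \beta(t)=(R_{t^{-1}},\,L_tR_{\sigma(t)},\,R_{\sigma(t)})\in AUT(L,\cdot)\quad\text{for every }t\in L, \]
while Theorem \ref{2}(5) furnishes the companion left-Bol autotopisms available from the M-loop structure. The decisive structural fact is that isotopic loops have isomorphic autotopism groups: using the isotopism $(R_b^{-1},L_a^{-1},I)$ one checks directly that $(A,B,C)\in AUT(L,\cdot)$ if and only if $(R_b^{-1}AR_b,\,L_a^{-1}BL_a,\,C)\in AUT(L,\circ)$. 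Transporting each $\beta(t)$ through this correspondence yields a family
\[ (R_b^{-1}R_{t^{-1}}R_b,\;L_a^{-1}L_tR_{\sigma(t)}L_a,\;R_{\sigma(t)})\in AUT(L,\circ), \]
and the entire problem becomes: recognize these transported triples as the right-B-loop triples $\beta'(s)=(R'_{s^{\rho'}},\,L'_sR'_{\sigma'(s)},\,R'_{\sigma'(s)})$ of $(L,\circ)$, where $R',L'$ denote the $\circ$-translations, $s^{\rho'}$ the right inverse in $(L,\circ)$, and $\sigma'$ is a single self-map still to be constructed.

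Finally I would carry out this identification. Expressing the $\circ$-translations through the $\cdot$-translations, e.g. $R'_u=R_b^{-1}R_{uL_a^{-1}}$, I would read the forced value of $\sigma'$ off the third coordinate and then verify that the first two coordinates fall into line. Here lies the main obstacle: a transported entry is an a priori uncontrolled composite of $\cdot$-translations, and to realize it as a single $\circ$-translation one must collapse such composites. This is exactly where the M-loop collapsing identities $R_{y\cdot\sigma(y)}=R_yR_{\sigma(y)}$ (Theorem \ref{1}(3)) and $L_{\sigma(x)\cdot x}=L_xL_{\sigma(x)}$ (Theorem \ref{2}(3)), together with both inverse properties, do the work; without them the composites would not simplify, mirroring precisely the non-invariance of general Bol loops. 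Once $\sigma'$ is shown to be well defined and single-valued and $\beta'(s)\in AUT(L,\circ)$ is established for all $s$, Theorem \ref{1}(5) read in $(L,\circ)$ gives the generalized right Bol identity for $\circ$, so $(L,\circ)$ is a right B-loop, which completes the proof.
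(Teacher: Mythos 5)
The paper itself offers no argument for this statement---it is quoted verbatim as a preliminary from Ajmal \cite{5}---so your proposal can only be judged on its own terms, and as it stands it has a genuine gap exactly at the step you label ``the main obstacle.'' Your reductions are fine: passing to a principal isotope $x\circ y=xR_b^{-1}\cdot yL_a^{-1}$, invoking Theorem~\ref{3} to upgrade the hypothesis to an M-loop, and the transport rule $(A,B,C)\in AUT(L,\cdot)\Leftrightarrow (R_b^{-1}AR_b,\,L_a^{-1}BL_a,\,C)\in AUT(L,\circ)$ are all correct. But the identification you defer (``read the forced value of $\sigma'$ off the third coordinate and then verify that the first two coordinates fall into line'') is not a routine verification that the cited identities can perform; it is the whole theorem, and for general $a,b$ it \emph{fails} for the family you propose to recognize. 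Concretely, with $R'_u=R_b^{-1}R_{uL_a^{-1}}$ and $L'_u=L_a^{-1}L_{uR_b^{-1}}$, matching third coordinates $R'_{\sigma'(s)}=R_{\sigma(t)}$ forces $R_{\sigma'(s)L_a^{-1}}=R_bR_{\sigma(t)}$; evaluating at $e$ pins $\sigma'(s)=a(b\cdot\sigma(t))$, and one then needs the operator identity $R_bR_{\sigma(t)}=R_{b\cdot\sigma(t)}$. Matching first coordinates, $R_b^{-1}R_{t^{-1}}R_b=(R'_s)^{-1}=R_{sL_a^{-1}}^{-1}R_b$, which with the RIP ($R_{t^{-1}}=R_t^{-1}$) forces $R_tR_b=R_{tb}$; since the pairing $s=a(tb)$ lets $t$ run over all of $L$ as $s$ does, this says precisely $b\in N_\rho(L,\cdot)$. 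Neither collapse follows from Theorem~\ref{1}(3) or Theorem~\ref{2}(3): those identities collapse only the \emph{matched} products $R_yR_{\sigma(y)}$ and $L_xL_{\sigma(x)}$, never products with independent arguments such as $R_tR_b$ or $R_bR_{\sigma(t)}$.

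So the transported family $\bigl(R_b^{-1}R_{t^{-1}}R_b,\,L_a^{-1}L_tR_{\sigma(t)}L_a,\,R_{\sigma(t)}\bigr)$ simply is not the $\sigma'$-Bol family of $(L,\circ)$ unless $a,b$ satisfy nuclear-type associativity conditions; your plan as written proves the theorem only for those special isotopes. A correct proof must \emph{manufacture} the required triples $\bigl((R'_s)^{-1},\,L'_sR'_{\sigma'(s)},\,R'_{\sigma'(s)}\bigr)$ by composing several autotopisms of $(L,\cdot)$---the right-B-loop family, the left-B-loop family supplied by the M-loop structure, and the RIP-derived triples of Lemma~\ref{alpha3}---arranged so that the conjugating factors $R_b$ and $L_a$ cancel, rather than transporting one family and hoping to recognize it. This is already visible in the classical case $\sigma=\mathrm{id}$, where your statement becomes the theorem that loop isotopes of Moufang loops are (right) Bol: the known proofs are composition and pseudo-automorphism arguments, not a one-family conjugation. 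Until that composition is exhibited (and the resulting $\sigma'$ shown to be a single-valued self-map), the proposal is a plausible strategy outline, not a proof.
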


\begin{myexam}
Let $R$ be a ring of all $2\times 2$ matrices taken over the field of three elements and let $G=R\times R$. For all $(u,f),(v,g)\in G$,
define $(u,f)\cdot (v,g)=(u+v,f+g+uv^3)$. Then $(G,\cdot )$ is a loop which is not a right Bol loop but which is a $\sigma$-generalized Bol loop with $\sigma~:x\mapsto x^2$ .
\end{myexam}

We introduce the notions defined below for the first time.
\begin{mydef}\label{a1.1:3}(Twin Special Mappings)

Let $(G,\cdot )$ be a loop and let $\alpha,\beta\in SYM(G)$ such that $\alpha =\psi R_x,\beta=\psi R_y$, for some $x,y\in G$ and $\psi\in SYM(G)$. Then $\alpha$ and $\beta$ are called twin special maps (twins). $\alpha$ (or $\beta$) is called a twin map (twin) of $\beta$ (or $\alpha$) or simply a twin map.
\end{mydef}
Let $(Q,\cdot)$ be a loop. Define
\begin{gather*}
TBS_1(Q,\cdot)=\{\alpha\in SYM(Q)~|~\alpha~\textrm{is any twin map}\},\\
T_1(Q,\cdot)=T_1(Q)=\{\psi\in SYM(Q)~|~\alpha=\psi R_x\in TBS_1(Q,\cdot),~x\in Q,~\psi:e\mapsto e\},\\
TBS_2(Q,\cdot)=\{\alpha\in BS(Q,\cdot)~|~\alpha\in TBS_1(Q,\cdot)\},\\
T_2(Q,\cdot)=T_2(Q)=\{\psi\in SYM(Q)~|~\alpha=\psi R_x\in TBS_2(Q,\cdot),~x\in Q,~\psi:e\mapsto e\}~\textrm{and}\\
T_3(Q,\cdot)=T_3(Q)=\{\psi\in T_2(Q)~|~\alpha^{-1}\sim\beta^{-1}~\textrm{for any twin maps $\alpha,\beta\in SYM(Q)$}\}.
\end{gather*}
Define a relation $\sim$ on $SYM(Q)$ as $\alpha\sim\beta$ if there exists $x\in Q$ such that $\alpha^{-1}=R_x\beta^{-1}$.

The following results will be of judicious use to prove our main results.
\begin{mylem}\label{alpha1} (Bruck \cite{7})

$(H,\circ)$ is a RIPL if and only if $(Q,\cdot)$ is a RIPL.
\end{mylem}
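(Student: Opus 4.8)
The plan is to work directly from the definition of the A-holomorph and reduce the right inverse property of $(H,\circ)$ to that of $(Q,\cdot)$ by an explicit computation. First I would pin down the identity and the right inverse in $(H,\circ)$. Since $(\alpha,x)\circ(I,e)=(\alpha I,\,xI\cdot e)=(\alpha,x)$ and $(I,e)\circ(\alpha,x)=(I\alpha,\,e\alpha\cdot x)=(\alpha,x)$ (using that every automorphism fixes $e$), the identity element of $(H,\circ)$ is $(I,e)$, where $I$ is the identity automorphism and $e$ the identity of $Q$. Solving $(\alpha,x)\circ(\beta,y)=(I,e)$ then forces $\beta=\alpha^{-1}$ and $x\alpha^{-1}\cdot y=e$, so the right inverse is $(\alpha,x)^\rho=\big(\alpha^{-1},(x\alpha^{-1})^\rho\big)$.

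Next I would evaluate the RIP expression in $(H,\circ)$ on a typical triple. Expanding $\big[(\gamma,z)\circ(\alpha,x)\big]\circ(\alpha,x)^\rho$, the first coordinate collapses to $\gamma$ because $(\gamma\alpha)\alpha^{-1}=\gamma$, while the second coordinate is $\big((z\alpha\cdot x)\alpha^{-1}\big)\cdot(x\alpha^{-1})^\rho$. The key step is to invoke that $\alpha^{-1}\in A(Q)\le AUM(Q,\cdot)$ is an automorphism, so $(z\alpha\cdot x)\alpha^{-1}=z\cdot x\alpha^{-1}$, and the second coordinate simplifies to $(z\cdot x\alpha^{-1})\cdot(x\alpha^{-1})^\rho$. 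Thus $(H,\circ)$ is a RIPL if and only if $(z\cdot w)\cdot w^\rho=z$ holds for all $z\in Q$ and all $w$ of the form $w=x\alpha^{-1}$ with $x\in Q$, $\alpha\in A(Q)$.

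Finally I would settle both implications from this reduced identity. For the forward direction, if $(Q,\cdot)$ is a RIPL then $(z\cdot w)w^\rho=z$ for every $w\in Q$, in particular for $w=x\alpha^{-1}$, so $(H,\circ)$ is a RIPL. For the converse, if $(H,\circ)$ is a RIPL, I would specialize $\alpha=I$ (the identity automorphism always lies in the group $A(Q)$), which yields $(z\cdot x)x^\rho=z$ for all $z,x\in Q$, i.e.\ the RIP in $(Q,\cdot)$. The computation is essentially routine; the only points requiring care are the correct use of the automorphism property in the second coordinate and the observation that choosing $\alpha=I$ already recovers the RIP on all of $Q$, so one never needs $w=x\alpha^{-1}$ to exhaust $Q$ in the converse.
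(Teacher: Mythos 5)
Your proof is correct. The paper states this lemma without proof, citing it from Bruck's \emph{Contributions to the theory of loops}, so there is no internal argument to compare against; your direct computation---identifying the identity $(I,e)$ and the right inverse $(\alpha,x)^\rho=\big(\alpha^{-1},(x\alpha^{-1})^\rho\big)$ in $(H,\circ)$, using the automorphism property of $\alpha^{-1}$ to collapse the second coordinate to $(z\cdot x\alpha^{-1})\cdot(x\alpha^{-1})^\rho$, and specializing $\alpha=I$ for the converse---is the standard, complete argument for exactly this statement.
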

\begin{mylem}\label{alpha2}(Adeniran \cite{phd111})

$(Q,\cdot)$ is a $\sigma$-generalised Bol loop if and only if
$(R_{x}^{-1}, L_xR_{\sigma(x)}, R_{\sigma(x)})\in AUT(Q,\cdot)$ for all $x\in Q$.
\end{mylem}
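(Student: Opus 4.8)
The plan is to prove both implications at once by translating the autotopism condition directly into the defining identity \eqref{eq:8}, since the equivalence is really just an unpacking of the definition of an autotopism together with the bijectivity of right translations. Throughout I rename the fixed element in the triple from $x$ to $y$, so that the statement to be established reads: $(R_y^{-1}, L_yR_{\sigma(y)}, R_{\sigma(y)})\in AUT(Q,\cdot)$ for every $y\in Q$. This matches the role of $y$ as the distinguished argument of $\sigma$ in \eqref{eq:8}.

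First I would write out what membership in $AUT(Q,\cdot)$ means for this triple. By the definition of an autotopism, $(R_y^{-1}, L_yR_{\sigma(y)}, R_{\sigma(y)})$ lies in $AUT(Q,\cdot)$ precisely when
$$\bigl(uR_y^{-1}\bigr)\cdot\bigl(vL_yR_{\sigma(y)}\bigr)=(u\cdot v)R_{\sigma(y)}\qquad\text{for all }u,v\in Q.$$
Evaluating the translation maps gives $vL_yR_{\sigma(y)}=(yv)\sigma(y)$ and $(u\cdot v)R_{\sigma(y)}=(uv)\sigma(y)$, so the condition becomes
$$\bigl(uR_y^{-1}\bigr)\cdot\bigl((yv)\sigma(y)\bigr)=(uv)\sigma(y).$$

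The key step is the change of variable $u=xy$ for $x\in Q$. Since $R_y$ is a bijection of $Q$ (right translations are bijective in a loop), letting $u$ range over $Q$ is the same as letting $x$ range over $Q$, and under this substitution $uR_y^{-1}=x$ while $uv=(xy)v$. Writing $v=z$, the displayed condition turns into
$$x\cdot\bigl((yz)\sigma(y)\bigr)=\bigl((xy)z\bigr)\sigma(y),$$
which is exactly \eqref{eq:8}. Because the substitution $u\leftrightarrow xy$ is a bijection of $Q$ for each fixed $y$, the two universally quantified statements are equivalent: if $(Q,\cdot)$ satisfies \eqref{eq:8} then the autotopism holds for all $y$, and conversely if the autotopism holds for all $y$ then reading the equality back with $u=xy$ recovers \eqref{eq:8} for all $x,y,z$.

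I do not expect any genuine obstacle here; the whole content is the dictionary between the identity and the autotopism. The only points requiring a little care are the consistent renaming of the fixed variable and the appeal to bijectivity of $R_y$, which is what guarantees that quantifying over $u$ is interchangeable with quantifying over $x=uR_y^{-1}$, so that no generality is lost in either direction.
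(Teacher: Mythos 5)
Your proof is correct: unpacking the autotopism condition for $(R_y^{-1},L_yR_{\sigma(y)},R_{\sigma(y)})$ and making the bijective change of variable $u=xy$ recovers identity \eqref{eq:8} exactly, and every step is reversible, so both implications follow. Note that the paper itself gives no proof of this lemma (it is imported from Adeniran's thesis \cite{phd111}), but your argument is the standard definitional unpacking that establishes it, so there is nothing to fault and no divergence of method worth recording.
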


\begin{mylem}\label{alpha3}(Bruck \cite{8})

Let $(Q,\cdot)$ be a RIPL. If $(U,V,W)\in AUT(Q,\cdot)$, then $(W,JVJ,U)\in AUT(Q,\cdot)$.
\end{mylem}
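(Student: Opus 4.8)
The plan is to unwind both the hypothesis and the conclusion into pointwise identities, and then manufacture the desired identity by one well-chosen substitution followed by two uses of the right inverse property. Spelling out $(U,V,W)\in AUT(Q,\cdot)$, it asserts $aU\cdot bV=(ab)W$ for all $a,b\in Q$. The conclusion $(W,JVJ,U)\in AUT(Q,\cdot)$ is the statement that $xW\cdot y(JVJ)=(xy)U$ for all $x,y\in Q$, where I read $J=J_\rho$ (the right-inverse map), so that $y(JVJ)=\big((y^\rho)V\big)^\rho$. The whole task is therefore to produce this single equation from the defining identity of $(U,V,W)$.

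First I would substitute the special pair $a=xy$ and $b=y^\rho$ into the hypothesis, obtaining $(xy)U\cdot(y^\rho)V=\big((xy)\cdot y^\rho\big)W$. Because $(Q,\cdot)$ is a RIPL, the inner bracket collapses by \eqref{eq:9}, namely $(xy)\cdot y^\rho=x$, leaving $(xy)U\cdot(y^\rho)V=xW$. This is the heart of the argument: the superfluous right factor $y^\rho$ exactly cancels the $y$ buried inside $xy$.

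To finish, write $c=(xy)U$ and $d=(y^\rho)V$, so the identity just found reads $c\cdot d=xW$. A second application of \eqref{eq:9}, this time in the form $(cd)\cdot d^\rho=c$, gives $xW\cdot d^\rho=c=(xy)U$; and since $d^\rho=\big((y^\rho)V\big)^\rho=y(JVJ)$ by the very definition of $J$, this is precisely $xW\cdot y(JVJ)=(xy)U$. Thus $(W,JVJ,U)\in AUT(Q,\cdot)$. The only genuine obstacle is spotting the substitution $a=xy,\ b=y^\rho$; after that the computation is forced and uses nothing but \eqref{eq:9} twice. As a consistency check I would note that in any RIPL the right-inverse map is an involution, since $R_{x^\rho}=R_x^{-1}$ forces $(x^\rho)^\rho=x$ by injectivity of $a\mapsto R_a$, so the symbol $J$ and the middle entry $JVJ$ are unambiguous; but this fact is not actually needed for the two steps above.
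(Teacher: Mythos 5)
Your proof is correct. The paper does not actually prove this lemma --- it is quoted from Bruck \cite{8} as a known preliminary result --- so there is no internal argument to compare against; your derivation (substitute $a=xy$, $b=y^\rho$ into $aU\cdot bV=(ab)W$, collapse with \eqref{eq:9}, then apply \eqref{eq:9} once more to move $(y^\rho)V$ to the other side as $\big((y^\rho)V\big)^\rho=y(JVJ)$) is precisely the standard proof of this fact. Your side remark is also well taken: in a RIPL one has $(x^\rho)^\rho=x$ and indeed $x^\lambda=x^\rho$, so $J$ is an involutive permutation and the middle component $JVJ$ is a genuine bijection, which is needed for $(W,JVJ,U)$ to qualify as an autotopism.
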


\section{Main Results}
\begin{myth}\label{4.1}
Let $(Q,\cdot)$ be a loop with a self-map $\sigma$ and let $(H,\circ )$ be the A-holomorph of $(Q,\cdot)$ with a self-map $\sigma'$ such that $\sigma'~:~(\alpha,x)\mapsto (\alpha,\sigma(x))$ for all $(\alpha,x)\in H$. The A-holomorph $(H,\circ)$ of $(Q,\cdot)$ is a $\sigma'$-generalised
Bol loop if and only if $\Big(R_{x}^{-1}, L_xR_{[\sigma(x\gamma^{-1})]\alpha^{-1}},R_{[\sigma(x\gamma^{-1})]\alpha^{-1}}\Big)\in AUT(Q,\cdot)$
for all $x\in Q$ and all $\alpha,\gamma\in A(Q)$.
\end{myth}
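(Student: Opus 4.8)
The plan is to apply the autotopism characterisation of generalised Bol loops (Lemma~\ref{alpha2}) directly to the holomorph $(H,\circ)$, and then pull the single resulting autotopism back to $(Q,\cdot)$. Since $(H,\circ)$ is itself a loop carrying the self-map $\sigma'$, Lemma~\ref{alpha2} says that $(H,\circ)$ is a $\sigma'$-generalised Bol loop if and only if
\[
\big(\mathbb{R}_{(\beta,y)}^{-1},\ \mathbb{L}_{(\beta,y)}\mathbb{R}_{\sigma'(\beta,y)},\ \mathbb{R}_{\sigma'(\beta,y)}\big)\in AUT(H,\circ)\qquad\textrm{for all }(\beta,y)\in H,
\]
where $\sigma'(\beta,y)=(\beta,\sigma(y))$. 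The whole theorem then amounts to rewriting this one membership in $AUT(H,\circ)$ as a membership in $AUT(Q,\cdot)$.

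First I would compute the three translations from the product $(\alpha,x)\circ(\beta,y)=(\alpha\beta,x\beta\cdot y)$. A direct calculation gives
\[
(\alpha,x)\mathbb{R}_{(\beta,y)}^{-1}=\big(\alpha\beta^{-1},(xR_y^{-1})\beta^{-1}\big),\qquad (\delta,w)\mathbb{R}_{(\beta,\sigma(y))}=\big(\delta\beta,w\beta\cdot\sigma(y)\big),
\]
and, composing, $(\gamma,z)\mathbb{L}_{(\beta,y)}\mathbb{R}_{(\beta,\sigma(y))}=\big(\beta\gamma\beta,(y\gamma\cdot z)\beta\cdot\sigma(y)\big)$. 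Substituting these into the autotopism identity $(\alpha,x)A\circ(\gamma,z)B=\big((\alpha,x)\circ(\gamma,z)\big)C$ for the triple $(A,B,C)$ above, and comparing the two coordinates of $H=A(Q)\times Q$ separately, is the heart of the computation.

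The $A(Q)$-coordinate reduces to the tautology $\alpha\gamma\beta=\alpha\gamma\beta$ and so imposes no condition; all the content lies in the $Q$-coordinate. There I would use that each $\xi\in A(Q)$ is an automorphism, hence commutes with the product and satisfies $(wR_v^{-1})\xi=(w\xi)R_{v\xi}^{-1}$, to peel off the outer automorphisms, and then substitute so as to absorb $R_y^{-1}$. The coordinate equation then collapses to a generalised-Bol identity of the form
\[
(ux\cdot z)\,t=u\,(xz\cdot t)
\]
(compare \eqref{eq:8}), in which $x$ plays the role of the Bol element and $t$ is the companion built from $\sigma(y)$ once $y$ is re-expressed through the surviving automorphisms. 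Reading this back through Lemma~\ref{alpha2}, now applied to $(Q,\cdot)$, yields exactly $\big(R_x^{-1},L_xR_t,R_t\big)\in AUT(Q,\cdot)$ with $t=[\sigma(x\gamma^{-1})]\alpha^{-1}$.

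The main obstacle is the bookkeeping of $\alpha,\beta,\gamma$ as they propagate through the nested products: one must track precisely which coordinate each automorphism acts on, so that after renaming the Bol element $y\gamma\beta$ as a single variable $x$ the bare companion $\sigma(y)$ reorganises into the stated $[\sigma(x\gamma^{-1})]\alpha^{-1}$. Arranging the companion in exactly this shape --- in particular moving the surviving automorphism past $\sigma$ --- is the delicate step and is where the calculation must be done most carefully. One also checks here that, as $(\beta,y)$ ranges over $H$ and the automorphism of the third factor ranges over $A(Q)$, the data $x,\alpha,\gamma$ sweep out all of $Q\times A(Q)\times A(Q)$, supplying the quantifiers in the statement. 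Finally, because every step in the reduction is an equivalence, running the chain in reverse gives the converse, so the ``if and only if'' is obtained at no extra cost.
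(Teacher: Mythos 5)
Your frame is the paper's own: apply Lemma~\ref{alpha2} to $(H,\circ)$, expand the autotopism identity in coordinates, observe that the $A(Q)$-coordinate is tautological, and convert the $Q$-coordinate into an autotopism statement for $(Q,\cdot)$. Your translation formulas are correct, and using $\mathbb{R}_{(\beta,y)}^{-1}$ rather than the paper's $\mathbb{R}_{(\beta,y)^{-1}}$ is legitimate (arguably cleaner, since it needs no right-inverse-property conversion via Lemma~\ref{alpha1} and no inverse elements). The gap is precisely the step you flag as delicate. The $Q$-coordinate of the autotopism identity reads
\begin{equation*}
(xR_y^{-1})\gamma\beta\cdot\big[(y\gamma\cdot z)\beta\cdot\sigma(y)\big]=(x\gamma\cdot z)\beta\cdot\sigma(y).
\end{equation*}
Your stated plan --- push the automorphisms through the translations via $(wR_v^{-1})\xi=(w\xi)R_{v\xi}^{-1}$, absorb them into renamed variables, and then obtain the companion $[\sigma(x\gamma^{-1})]\alpha^{-1}$ by ``moving the surviving automorphism past $\sigma$'' --- fails at that last move. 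Carried out literally, the substitution $u=x\gamma\beta$, $v=z\beta$, $w=y\gamma\beta$ turns the equation into $\big(R_{w}^{-1},L_wR_{\sigma(w\beta^{-1}\gamma^{-1})},R_{\sigma(w\beta^{-1}\gamma^{-1})}\big)\in AUT(Q,\cdot)$, in which \emph{every} automorphism sits inside the argument of $\sigma$. Since $\sigma$ is an arbitrary self-map with no equivariance property whatsoever, nothing can be commuted out of (or into) $\sigma$, so from this form you cannot reach the stated companion $[\sigma(x\gamma^{-1})]\alpha^{-1}$, which has one automorphism outside.

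The correct manoeuvre is the opposite one, and it is what the paper does at \eqref{eq:31}--\eqref{eq:33}: do not absorb the Bol element's automorphism $\beta$ into the variables, but factor it out of the whole equation. Since $\beta$ is an automorphism, the displayed equation is equivalent to $\big[(xR_y^{-1})\gamma\cdot\big((y\gamma\cdot z)\cdot\sigma(y)\beta^{-1}\big)\big]\beta=\big[(x\gamma\cdot z)\cdot\sigma(y)\beta^{-1}\big]\beta$; cancelling $\beta$ and renaming only $u=x\gamma$, $w=y\gamma$, $v=z$ yields $\big(R_{w}^{-1},L_wR_{[\sigma(w\gamma^{-1})]\beta^{-1}},R_{[\sigma(w\gamma^{-1})]\beta^{-1}}\big)\in AUT(Q,\cdot)$, which is exactly the theorem's condition with $\beta$ in the role of $\alpha$. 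The conceptual point you are missing is that $\sigma(y)$ is the unique factor in the holomorph products that never acquires an automorphism; factoring $\beta$ out of everything else is what deposits $\beta^{-1}$ \emph{outside} $\sigma$ --- the companion gets its shape from this bookkeeping, not from any commutation past $\sigma$. (The all-inside condition your route produces is in fact equivalent to the theorem's, since for fixed $\beta,\gamma$ both are rewritings of the same holomorph identity; but that equivalence is exactly what has to be proved, and the factoring argument above is its proof.)
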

\begin{proof}
Define $\sigma'~:H\to H$ as $\sigma'(\alpha,x)=(\alpha,\sigma(x))$. Let $(\alpha,x),(\beta,y),(\gamma,z)\in H$, then by Lemma~\ref{alpha1} and Lemma~\ref{alpha2}, $(H,\circ)$ is a $\sigma'$-generalised Bol loop if and only if
$(\mathbb{R}_{(\alpha,x)^{-1}}, \mathbb{L}_{(\alpha,x)}\mathbb{R}_{\sigma'(\alpha,x)}, \mathbb{R}_{\sigma'(\alpha,x)})\in AUT(H,\circ)$ for all $(\alpha,x)\in H$ if and only if $(\mathbb{R}_{(\alpha,x)^{-1}}, \mathbb{L}_{(\alpha,x)}\mathbb{R}_{(\alpha,\sigma (x))}, \mathbb{R}_{(\alpha,\sigma (x))})\in AUT(H,\circ )\Longleftrightarrow$
\begin{gather}
(\beta,y)\mathbb{R}_{(\alpha,x)^{-1}}\circ(\gamma,z)\mathbb{L}_{(\alpha,x)}\mathbb{R}_{(\alpha,\sigma (x))}
= [(\beta,y)
\circ(\gamma,z)]\mathbb{R}_{(\alpha,\sigma (x))}\label{eq:24}\\
\Leftrightarrow[(\beta,y)\circ(\alpha,x)^{-1}]\circ [((\alpha,x)\circ(\gamma,z))\circ(\alpha,\sigma(x))]
=[(\beta,y)\circ(\gamma,z)]\circ
(\alpha,\sigma(x))\label{eq:25}
\end{gather}
Let $(\beta,y)\circ (\alpha,x)^{-1}=(\tau,t)$. Since $(\alpha,x)^{-1}=(\alpha^{-1},(x\alpha^{-1})^{-1})$, then
\begin{equation}\label{eq:26}
(\tau,t)=(\beta\alpha^{-1},(yx^{-1})\alpha^{-1})
\end{equation}
From \eqref{eq:26},
\begin{gather}
(\tau,t)\circ[(\alpha\gamma,x\gamma\cdot
z)\circ(\alpha,\sigma(x))]=(\beta
\gamma,y\gamma\cdot z)\circ(\alpha,\sigma(x))\label{eq:27}\\
\Leftrightarrow\Big(\tau\alpha\gamma\alpha,(t\alpha\gamma\alpha)\big((x\gamma\cdot
z)\alpha\cdot\sigma(x)\big)\Big)
=\big(\beta\gamma\alpha,(y\gamma\cdot z)\alpha\cdot\sigma(x)\big)\label{eq:28}
\end{gather}
Putting \eqref{eq:26} into \eqref{eq:28}, we have
\begin{gather}
\Big(\beta\alpha^{-1}\alpha\gamma\alpha,(yx^{-1})\alpha^{-1}(\alpha\gamma\alpha)\big((x\gamma\cdot
z)\alpha\cdot\sigma(x)\big)\Big) = \big(\beta\gamma\alpha,(y\gamma\cdot
z)\alpha\cdot\sigma(x)\big)\label{eq:29}\\
\Leftrightarrow\Big(\beta\gamma\alpha, (yx^{-1})\gamma\alpha\big[(x\gamma\cdot z)\alpha\cdot\sigma(x)\big]\Big)
=\big(\beta\gamma\alpha,(y\gamma\cdot z)\alpha\cdot\sigma(x)\big)\label{eq:30}\\
\Leftrightarrow(yx^{-1})\gamma\alpha\cdot [(x\gamma\cdot z)\alpha\cdot\sigma(x)]=(y\gamma\cdot z)\alpha\cdot\sigma(x)\label{eq:31}\\
\Leftrightarrow \big[(yx^{-1})\gamma\cdot [(x\gamma\cdot z)\cdot
(\sigma(x)\alpha^{-1})]\big]\alpha=[(y\gamma\cdot z)
\cdot(\sigma(x)\alpha^{-1})]\alpha\label{eq:32}\\
\Leftrightarrow (y\gamma x^{-1}\gamma )[(x\gamma\cdot z)\cdot (\sigma(x)\alpha^{-1})]=(y\gamma\cdot z)(\sigma(x)\alpha^{-1})\label{eq:33}
\end{gather}
Let $\bar{y}=y\gamma$, then \eqref{eq:33} becomes
\begin{gather}
(\bar{y}\cdot x^{-1}\gamma)[(x\gamma\cdot z)(\sigma(x)\alpha^{-1})]=(\bar{y}\cdot z)(\sigma(x)\alpha^{-1})\label{eq:34}\\
\Leftrightarrow\Big(R_{x\gamma}^{-1}, L_{x\gamma}R_{[\sigma(x)\alpha^{-1}]},R_{[\sigma(x)\alpha^{-1}]}\Big)\in AUT(Q,\cdot)\label{eq:34.}\\
\textrm{and replacing $x\gamma$ by $x$,}~\Big(R_{x}^{-1}, L_xR_{[\sigma(x\gamma^{-1})]\alpha^{-1}},R_{[\sigma(x\gamma^{-1})]\alpha^{-1}}\Big)\in AUT(Q,\cdot).\qed\label{eq:35}
\end{gather}
\end{proof}

\begin{mycor}\label{4.2}
Let $(Q,\cdot)$ be a loop with a self-map $\sigma$
 and let $(H,\circ )$ be the A-holomorph of $(Q,\cdot)$ with a self-map $\sigma'$ such that $\sigma'~:~(\alpha,x)\mapsto (\alpha,\sigma(x))$ for all $(\alpha,x)\in H$. $(H,\circ)$ is a $\sigma'$-generalised
Bol loop if and only if $(Q,\cdot)$ is a $\alpha^{-1}\sigma\gamma^{-1}$-generalised Bol loop for any $\alpha,\gamma\in A(Q)$.
\end{mycor}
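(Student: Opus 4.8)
The plan is to derive Corollary~\ref{4.2} directly from Theorem~\ref{4.1} by reinterpreting the autotopism condition obtained there. Theorem~\ref{4.1} tells us that $(H,\circ)$ is $\sigma'$-generalised Bol if and only if
\[
\Big(R_{x}^{-1},\, L_xR_{[\sigma(x\gamma^{-1})]\alpha^{-1}},\, R_{[\sigma(x\gamma^{-1})]\alpha^{-1}}\Big)\in AUT(Q,\cdot)
\]
for all $x\in Q$ and all $\alpha,\gamma\in A(Q)$. On the other hand, Lemma~\ref{alpha2} characterises when $(Q,\cdot)$ is a $\tau$-generalised Bol loop for an arbitrary self-map $\tau$: this holds exactly when $\big(R_x^{-1}, L_xR_{\tau(x)}, R_{\tau(x)}\big)\in AUT(Q,\cdot)$ for all $x\in Q$. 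So the whole task reduces to recognising the self-map hidden in the subscripts of the Theorem~\ref{4.1} autotopism.

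The key step is the identification $[\sigma(x\gamma^{-1})]\alpha^{-1} = x\,(\alpha^{-1}\sigma\gamma^{-1})$, where I read $\alpha^{-1}\sigma\gamma^{-1}$ as the composite self-map sending $x\mapsto x\gamma^{-1}\mapsto\sigma(x\gamma^{-1})\mapsto\sigma(x\gamma^{-1})\alpha^{-1}$, using the paper's right-action convention for applying maps. First I would fix arbitrary $\alpha,\gamma\in A(Q)$ and set $\tau=\alpha^{-1}\sigma\gamma^{-1}$, so that $\tau(x)=[\sigma(x\gamma^{-1})]\alpha^{-1}$ for every $x\in Q$. With this single substitution the Theorem~\ref{4.1} condition becomes precisely $\big(R_x^{-1}, L_xR_{\tau(x)}, R_{\tau(x)}\big)\in AUT(Q,\cdot)$ for all $x\in Q$, which by Lemma~\ref{alpha2} is equivalent to $(Q,\cdot)$ being a $\tau$-generalised Bol loop, i.e.\ a $\alpha^{-1}\sigma\gamma^{-1}$-generalised Bol loop.

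To complete the equivalence I would check both quantifier directions. For the forward direction, if $(H,\circ)$ is $\sigma'$-generalised Bol, then the Theorem~\ref{4.1} autotopism holds for all $\alpha,\gamma$, hence for each fixed pair $(Q,\cdot)$ is $\alpha^{-1}\sigma\gamma^{-1}$-generalised Bol. Conversely, if $(Q,\cdot)$ is $\alpha^{-1}\sigma\gamma^{-1}$-generalised Bol for every $\alpha,\gamma\in A(Q)$, then by Lemma~\ref{alpha2} the corresponding autotopisms hold for all $x$ and all $\alpha,\gamma$, which is exactly the Theorem~\ref{4.1} criterion, so $(H,\circ)$ is $\sigma'$-generalised Bol. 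Since both characterisations carry the same universal quantification over $\alpha,\gamma\in A(Q)$, the two conditions match verbatim.

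The only genuine obstacle is bookkeeping with the composition order of the self-maps, since the paper writes maps on the right, so $\alpha^{-1}\sigma\gamma^{-1}$ must be parsed as ``first $\gamma^{-1}$, then $\sigma$, then $\alpha^{-1}$''; getting this ordering right is what makes the subscript $[\sigma(x\gamma^{-1})]\alpha^{-1}$ coincide with $x(\alpha^{-1}\sigma\gamma^{-1})$. Everything else is a direct translation between the two autotopism statements, so I expect the proof to be short once this identification is made explicit.
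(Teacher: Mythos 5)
Your proposal is correct and follows essentially the same route as the paper: both invoke Theorem~\ref{4.1}, identify the subscript $[\sigma(x\gamma^{-1})]\alpha^{-1}$ as the value at $x$ of the composite self-map $\sigma''=\alpha^{-1}\sigma\gamma^{-1}$, and then apply Lemma~\ref{alpha2} to conclude $(Q,\cdot)$ is $\sigma''$-generalised Bol. Your explicit attention to the composition order and to the universal quantification over $\alpha,\gamma\in A(Q)$ only makes explicit what the paper leaves implicit.
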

\begin{proof}
From Theorem~\ref{4.1}, $(H,\circ)$ is a $\sigma'$-generalised
Bol loop if and only if
$$\Big(R_{x}^{-1}, L_xR_{[\sigma(x\gamma^{-1})]\alpha^{-1}},R_{[\sigma(x\gamma^{-1})]\alpha^{-1}}\Big)\in AUT(Q,\cdot)\Leftrightarrow \Big(R_{x}^{-1}, L_xR_{\sigma''(x)},R_{\sigma''(x)}\Big)\in AUT(Q,\cdot)$$
 where $\sigma''=\alpha^{-1}\sigma\gamma^{-1}$, for all $x\in Q$ and all $\alpha,\gamma\in A(Q)$. It is equivalent to the fact that $(Q,\cdot)$ is a $\sigma''$-generalised Bol loop.\qed
 \end{proof}

\begin{myth}\label{5}
Let $(Q,\cdot)$ be a loop with a self-map $\sigma$ and let $(H,\circ )$ be the holomorph of $(Q,\cdot)$ with a self-map $\sigma'$ such that $\sigma'~:~(\alpha,x)\mapsto (\alpha,\alpha\sigma\gamma (x))$ for all $(\alpha,x)\in H$. Then, $(Q,\cdot)$ is a $\sigma$-generalised Bol
loop if and only if $(H,\circ)$ is a $\sigma'$-generalised
Bol loop.
\end{myth}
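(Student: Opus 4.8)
The plan is to rerun the autotopism reduction of Theorem~\ref{4.1} with the twisted self-map $\sigma'(\alpha,x)=(\alpha,(\alpha\sigma\gamma)(x))$ in place of $(\alpha,x)\mapsto(\alpha,\sigma(x))$, and to exhibit the two automorphism factors carried by $\sigma'$ as the exact inverses of the two twists that passage to the holomorph forces on the induced condition over $(Q,\cdot)$. First I would use Lemma~\ref{alpha1} and Lemma~\ref{alpha2} to rewrite ``$(H,\circ)$ is a $\sigma'$-generalised Bol loop'' as $(\mathbb{R}_{(\alpha,x)^{-1}},\mathbb{L}_{(\alpha,x)}\mathbb{R}_{\sigma'(\alpha,x)},\mathbb{R}_{\sigma'(\alpha,x)})\in AUT(H,\circ)$ for all $(\alpha,x)\in H$, just as at the start of Theorem~\ref{4.1}. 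Evaluating this triple on arbitrary $(\beta,y),(\gamma,z)\in H$ and expanding $\circ$, the inverse $(\alpha,x)^{-1}=(\alpha^{-1},(x\alpha^{-1})^{-1})$ and the translations, the first components agree automatically and the content collapses to a single identity between the second components in $(Q,\cdot)$.

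The heart of the argument is the simplification of that identity. Repeating the manipulations \eqref{eq:26}--\eqref{eq:34} of Theorem~\ref{4.1} — pulling the automorphism $\alpha$ out of every summand and abbreviating $\bar y=y\gamma$ — reduces it to $(R_{x\gamma}^{-1},L_{x\gamma}R_{w},R_{w})\in AUT(Q,\cdot)$, where $w=\big((\alpha\sigma\gamma)(x)\big)\alpha^{-1}$ is the second component of $\sigma'(\alpha,x)$ pulled back by $\alpha^{-1}$. At this point the leftmost factor $\alpha$ of the composite $\alpha\sigma\gamma$ is cancelled by $\alpha^{-1}$, so that $w=(\sigma\gamma)(x)=\sigma(x\gamma)$; this is exactly the outer twist $\alpha^{-1}$ that Corollary~\ref{4.2} produced for the untwisted self-map, now neutralised by the factor $\alpha$ deliberately inserted into $\sigma'$.

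To finish I would carry out the closing substitution of Theorem~\ref{4.1}, ``replace $x\gamma$ by $x$''. This turns the remaining subscript $\sigma(x\gamma)$ into $\sigma(x\gamma^{-1}\gamma)=\sigma(x)$, so the inner twist $\gamma^{-1}$ coming from the shift is annulled by the $\gamma$ that $\sigma'$ carries on the right. The condition therefore telescopes to $(R_x^{-1},L_xR_{\sigma(x)},R_{\sigma(x)})\in AUT(Q,\cdot)$ for all $x\in Q$, which by Lemma~\ref{alpha2} is precisely the assertion that $(Q,\cdot)$ is a $\sigma$-generalised Bol loop. Because each link — the RIP transfer of Lemma~\ref{alpha1}, the autotopism characterisations of Lemma~\ref{alpha2}, and the algebraic cancellations — is reversible, the two directions of the biconditional come out together. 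In one line, the whole computation is the identity $\alpha^{-1}(\alpha\sigma\gamma)\gamma^{-1}=\sigma$ of Corollary~\ref{4.2}: the self-map $\sigma'$ is rigged so that the twist $\sigma\mapsto\alpha^{-1}\sigma\gamma^{-1}$ imposed by the holomorph is exactly undone.

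The step I expect to cause the most trouble is the bookkeeping that makes both cancellations legitimate at once. The factor $\alpha$ emerging from the inversion term $(\alpha,x)^{-1}$ must line up with the leftmost $\alpha$ of $\alpha\sigma\gamma$, and — more delicately — the automorphism $\gamma$ of the arbitrary test factor $(\gamma,z)$ that appears through the ``replace $x\gamma$ by $x$'' step must be reconciled with the fixed $\gamma$ built into $\sigma'$. Verifying that these coincide uniformly for every test element is where I would spend the most care; should the two roles of $\gamma$ fail to match on the nose, I would absorb the discrepancy by conjugating the $\sigma$-generalised Bol autotopism by the diagonal autotopism $(\theta,\theta,\theta)\in AUT(Q,\cdot)$, $\theta\in A(Q)$, which converts $R_x,L_x$ into $R_{x\theta},L_{x\theta}$ and so supplies the twisted autotopisms demanded by each factor.
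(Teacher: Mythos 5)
Your main line is, in substance, the paper's own proof: the paper disposes of Theorem~\ref{5} with the single sentence that it ``follows from the proof of Theorem~\ref{4.1}'', and what that sentence suppresses is exactly the pair of cancellations you describe — the $\alpha$ produced by $(\alpha,x)^{-1}$ absorbing the left factor of $\alpha\sigma\gamma$, and the closing substitution ``replace $x\gamma$ by $x$'' absorbing the right factor — i.e.\ the identity $\alpha^{-1}(\alpha\sigma\gamma)\gamma^{-1}=\sigma$ read off from Corollary~\ref{4.2}. As a reconstruction of the intended argument your proposal is faithful, and you have also correctly located the crux.

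The problem is that the crux you flag is not bookkeeping that care will dissolve: it is a genuine gap (shared by the paper), and your fallback does not close it. The $\alpha$-cancellation is legitimate, because $\sigma'(\alpha,x)$ depends on the first coordinate of the Bol element $(\alpha,x)$ itself. But the $\gamma$ inside $\sigma'$ must be a \emph{fixed} element of $A(Q)$ for $\sigma'$ to be a single-valued self-map of $H$, whereas the $\gamma$ attached to $R_{x\gamma},L_{x\gamma}$ in \eqref{eq:34.} is the first coordinate of the \emph{arbitrary} test element $(\gamma,z)$. Keeping them distinct (write $(\gamma_0,z)$ for the test element), the reduction gives: $(H,\circ)$ is $\sigma'$-generalised Bol if and only if $\big(R_x^{-1},L_xR_{\sigma(x\gamma_0^{-1}\gamma)},R_{\sigma(x\gamma_0^{-1}\gamma)}\big)\in AUT(Q,\cdot)$ for all $x\in Q$ and all $\gamma_0\in A(Q)$; that is, if and only if $(Q,\cdot)$ is $\sigma_\delta$-generalised Bol for \emph{every} $\delta\in A(Q)$, where $\sigma_\delta:x\mapsto\sigma(x\delta)$. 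Only $\delta=I$ (i.e.\ $\gamma_0=\gamma$) recovers the $\sigma$-generalised Bol property, so the implication ``$(Q,\cdot)$ $\sigma$-generalised Bol $\Rightarrow$ $(H,\circ)$ $\sigma'$-generalised Bol'' needs all the other $\sigma_\delta$ as well. Your proposed repair cannot supply them: conjugating $\big(R_x^{-1},L_xR_{\sigma(x)},R_{\sigma(x)}\big)$ by the diagonal autotopism $(\theta,\theta,\theta)$ yields $\big(R_{x\theta}^{-1},L_{x\theta}R_{[\sigma(x)]\theta},R_{[\sigma(x)]\theta}\big)$, i.e.\ the \emph{conjugate} self-map $x\mapsto[\sigma(x\theta^{-1})]\theta$, not the \emph{translate} $x\mapsto\sigma(x\delta)$; the two families coincide only when $\sigma$ commutes with $A(Q)$. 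Indeed the missing implication is false in general: taking $\sigma=I$ and $\gamma=I$, the claim would say that every Bol loop is $\theta$-generalised Bol for all $\theta\in A(Q)$, equivalently (by Robinson's Lemma~6.1, quoted in the paper's Remark in Section~2) that its A-holomorph is Bol, whereas Robinson's Theorem~6.1 shows this requires the additional condition $x^{-1}\cdot x\theta\in N_\rho(Q,\cdot)$ for all $\theta\in A(Q)$. So the statement can only be salvaged by letting the $\gamma$ in $\sigma'$ track the test element — which is what the paper implicitly does, at the cost of $\sigma'$ no longer being a well-defined map — or by adding hypotheses; your instinct to keep the two roles of $\gamma$ separate was exactly right, and no uniform choice makes them ``coincide on the nose''.
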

\begin{proof}
The proof of this follows from the proof of Theorem~\ref{4.1}.
\end{proof}

\begin{myth}\label{6}
Let $(Q,\cdot)$ be a loop with a self-map $\sigma$ and let $(H,\circ )$ be the holomorph of $(Q,\cdot)$ with a self-map $\sigma'$ such that $\sigma'~:~(\alpha,x)\mapsto (\alpha,\sigma(x))$ for all $(\alpha,x)\in H$. Then, for any $\gamma\in A(Q)$, $(Q,\cdot)$ is a $\sigma\alpha\gamma^{-1}$-generalised flexible-Bol loop if and only if $(H,\circ)$ is a $\sigma'$-generalised
flexible-Bol loop.
\end{myth}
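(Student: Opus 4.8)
The plan is to take the definition at face value: a loop is a $\sigma$-generalised flexible-Bol loop exactly when it is at once a $\sigma$-generalised Bol loop \emph{and} a $\sigma$-flexible loop, where both clauses refer to the \emph{same} self-map. So the content of the theorem is not two unrelated equivalences but a single coherence statement — the one self-map $\sigma\alpha\gamma^{-1}$ on $Q$ must serve simultaneously as the Bol companion and as the flexible companion that correspond to $\sigma'$ on $(H,\circ)$. I would therefore run the two coordinate reductions in parallel and check that they collapse onto this common map, rather than proving two self-contained biconditionals with independently chosen companions.

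For the Bol clause I would invoke Theorem~\ref{4.1} and Corollary~\ref{4.2} directly: with $\sigma'\colon(\alpha,x)\mapsto(\alpha,\sigma(x))$, the holomorph $(H,\circ)$ is a $\sigma'$-generalised Bol loop if and only if $\big(R_x^{-1},\,L_xR_{[\sigma(x\gamma^{-1})]\alpha^{-1}},\,R_{[\sigma(x\gamma^{-1})]\alpha^{-1}}\big)\in AUT(Q,\cdot)$ for all $x\in Q$ and all $\alpha,\gamma\in A(Q)$, which by Lemma~\ref{alpha2} says precisely that $(Q,\cdot)$ is generalised Bol for the induced companion $x\mapsto[\sigma(x\gamma^{-1})]\alpha^{-1}$. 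No fresh computation is needed here beyond bookkeeping of the automorphism factors.

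The flexible clause is where the real work sits, and it must be carried out with the \emph{companion} present, not with the plain flexible identity. I would expand the holomorph flexibility
\[
[(\alpha,x)\circ(\beta,y)]\circ\sigma'(\alpha,x)=(\alpha,x)\circ[(\beta,y)\circ\sigma'(\alpha,x)]
\]
coordinatewise, using $(\alpha,x)\circ(\beta,y)=(\alpha\beta,x\beta\cdot y)$ and $\sigma'(\alpha,x)=(\alpha,\sigma(x))$. The first coordinates agree automatically (both equal $\alpha\beta\alpha$), while the second coordinates, after applying the automorphism $\alpha$, reduce to
\[
(x\beta\alpha\cdot y\alpha)\cdot\sigma(x)=x\beta\alpha\cdot\big(y\alpha\cdot\sigma(x)\big).
\]
Setting $\bar{x}=x\beta\alpha$ and $\bar{y}=y\alpha$ — admissible since $\alpha,\beta$ are bijections of $Q$, with $x=\bar{x}\alpha^{-1}\beta^{-1}$ — this is exactly the $\sigma$-flexible law $\bar{x}\bar{y}\cdot\sigma(\bar{x}\delta)=\bar{x}\cdot\bar{y}\sigma(\bar{x}\delta)$ of $(Q,\cdot)$ with companion $\delta=\alpha^{-1}\beta^{-1}$, the shifted argument $\bar{x}\delta$ arising forcibly from the substitution. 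As $(\alpha,x),(\beta,y)$ range over $H$, the companion $\delta$ sweeps out all of $A(Q)$; normalising against the prescribed factors $\gamma,\alpha$ is what converts this into the self-map $\sigma\alpha\gamma^{-1}$ named in the statement.

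The main obstacle is precisely the coherence demanded by the single self-map. The Bol reduction yields the companion $\alpha^{-1}\sigma\gamma^{-1}$, whereas the flexible reduction, once normalised, yields $\sigma\alpha\gamma^{-1}$; the heart of the proof is to show that these are the \emph{same} self-map of $Q$, which forces one to fix the composition conventions for the $A(Q)$-factors and to verify that $\alpha$ enters both reductions identically — any order-of-composition or inverse discrepancy between the Bol companion and the flexible companion surfaces exactly at this identification. Alongside this I must bridge a quantifier gap: holomorph flexibility is asserted \emph{for all} $(\alpha,x),(\beta,y)\in H$ and hence gives $Q$-flexibility for \emph{every} companion $\delta\in A(Q)$, whereas the defining $\sigma$-flexible law on $Q$ asks only for \emph{some} companion. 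I would close this using the group structure of $A(Q)$ together with the universal hypothesis ``for any $\gamma\in A(Q)$'', letting $\gamma$ range over $A(Q)$ to supply exactly the family $\delta=\alpha^{-1}\beta^{-1}$ needed to reconstitute the holomorph identity for all $\alpha,\beta$. Intersecting the Bol equivalence with the flexible equivalence then delivers the stated correspondence between $\sigma'$-generalised flexible-Bol structure on $(H,\circ)$ and $\sigma\alpha\gamma^{-1}$-generalised flexible-Bol structure on $(Q,\cdot)$.
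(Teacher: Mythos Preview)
Your flexible-clause reduction is essentially the paper's, and the companion $\delta=(\alpha\beta\alpha)^{-1}\gamma$ (your $\alpha^{-1}\beta^{-1}$ is the same idea before the final normalisation) is exactly what the paper obtains.

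The genuine gap is in the Bol clause. Invoking Theorem~\ref{4.1}/Corollary~\ref{4.2} as you do gives the companion $x\mapsto[\sigma(x\gamma^{-1})]\alpha^{-1}$, i.e.\ $\alpha^{-1}\sigma\gamma^{-1}$, and you correctly notice that this does \emph{not} match the target $\sigma\alpha\gamma^{-1}$. You then say ``the heart of the proof is to show that these are the same self-map of $Q$'', but you give no argument for this, and in general it fails: $\alpha^{-1}\circ\sigma$ and $\sigma\circ\alpha$ coincide only when $\sigma$ intertwines with $\alpha$ in a specific way, which nothing in the hypotheses guarantees. So the coherence you rightly identify as the crux is not actually established.

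The paper's route around this is the idea you are missing. It does \emph{not} reuse Theorem~\ref{4.1} for the Bol part. Instead it first observes that $\sigma'$-flexibility on $(H,\circ)$ is equivalent to $\mathbb{L}_{(\alpha,x)}\mathbb{R}_{\sigma'(\alpha,x)}=\mathbb{R}_{\sigma'(\alpha,x)}\mathbb{L}_{(\alpha,x)}$, which in turn is equivalent to the \emph{inverse} triple
\[
\big(\mathbb{R}_{(\alpha,x)},\,\mathbb{L}_{(\alpha,x)}^{-1}\mathbb{R}_{\sigma'(\alpha,x)}^{-1},\,\mathbb{R}_{\sigma'(\alpha,x)}^{-1}\big)
\]
being an autotopism of $(H,\circ)$. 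The paper then carries out the coordinate reduction of \emph{this} inverse autotopism (a fresh computation, not Theorem~\ref{4.1}), and that reduction lands on $\big(R_{\bar{x}},L_{\bar{x}}^{-1}R_{[\sigma(\bar{x}\gamma^{-1}\alpha)]}^{-1},R_{[\sigma(\bar{x}\gamma^{-1}\alpha)]}^{-1}\big)\in AUT(Q,\cdot)$, whose companion is $\sigma(\bar{x}\gamma^{-1}\alpha)=\sigma\alpha\gamma^{-1}(\bar{x})$ --- matching the flexible companion on the nose. Thus flexibility is not a separate clause to be intersected with the Bol clause; it is the mechanism that licenses passing to the inverse Bol autotopism, and that passage is precisely what moves the $\alpha$ from outside $\sigma$ (as in Corollary~\ref{4.2}) to inside $\sigma$ (as required here).
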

\begin{proof}
\begin{gather}
(R_{x}^{-1}, L_xR_{\sigma(x)}, R_{\sigma(x)})^{-1}=(R_{x},L_x^{-1}R_{\sigma(x)}^{-1}, R_{\sigma(x)}^{-1})\label{eq:35.1}\\
\Leftrightarrow L_x^{-1}R_{\sigma(x)}^{-1}=(L_xR_{\sigma(x)})^{-1}\Leftrightarrow R_{\sigma(x)}L_x=L_xR_{\sigma(x)}\label{eq:35.2}\\
\Leftrightarrow xy\cdot \sigma(x)=x\cdot y\sigma(x)\label{eq:36}
\end{gather}
Let $(\alpha,x),(\beta,y),(\gamma,z)\in H$, then by Lemma~\ref{alpha1} and Lemma~\ref{alpha2},
$(H,\circ)$ is a $\sigma'$-generalised Bol loop if and only if
$(\mathbb{R}_{(\alpha,x)^{-1}}, \mathbb{L}_{(\alpha,x)}\mathbb{R}_{\sigma'(\alpha,x)}, \mathbb{R}_{\sigma'(\alpha,x)})\in AUT(H,\circ)$ for all $(\alpha,x)\in H$. Thus, following \eqref{eq:35.1} to \eqref{eq:36},
\begin{gather}
(\mathbb{R}_{(\alpha,x)}^{-1}, \mathbb{L}_{(\alpha,x)}\mathbb{R}_{\sigma'(\alpha,x)}, \mathbb{R}_{\sigma'(\alpha,x)})^{-1}
=(\mathbb{R}_{(\alpha,x)}, \mathbb{L}_{(\alpha,x)}^{-1}\mathbb{R}_{\sigma'(\alpha,x)}^{-1}, \mathbb{R}_{\sigma'(\alpha,x)}^{-1})\label{eq:37}\\
\Leftrightarrow \mathbb{L}_{(\alpha,x)}\mathbb{R}_{\sigma'(\alpha,x)}=\mathbb{R}_{\sigma'(\alpha,x)}\mathbb{L}_{(\alpha,x)}\Leftrightarrow
(\alpha,x)(\beta,y)\circ \sigma'(\alpha,x)=(\alpha,x)\circ (\beta,y)\sigma'(\alpha,x)\label{eq:37.1}\\
\Leftrightarrow (\alpha,x)(\beta,y)\circ (\alpha,\sigma x)=(\alpha,x)\circ (\beta,y)(\alpha,\sigma x)\\
\Leftrightarrow\big(\alpha\beta\alpha ,(x\beta\cdot y)\alpha\cdot \sigma (x)\big)=\big(\alpha\beta\alpha ,x\beta\alpha\cdot (y\alpha\cdot \sigma (x))\big)\\
\Leftrightarrow (x\beta\alpha\cdot y\alpha)\sigma (x)=x\beta\alpha\cdot(y\alpha\cdot\sigma (x))\\
\Leftrightarrow (x\gamma^{-1}\alpha\beta\alpha\cdot y)\sigma \big(x\gamma^{-1}\alpha\big)=x\gamma^{-1}\alpha\beta\alpha\cdot\big(y\cdot\sigma \big(x\gamma^{-1}\alpha\big)\big)\\
\Leftrightarrow (x\gamma^{-1}\alpha\beta\alpha\cdot y)\sigma\alpha\gamma^{-1}(x)=x\gamma^{-1}\alpha\beta\alpha\cdot(y\cdot\sigma\alpha\gamma^{-1}(x))\\
\Leftrightarrow (xy)\sigma\alpha\gamma^{-1}\big((x(\alpha\beta\alpha)^{-1}\gamma\big)=x\cdot(y\cdot\sigma\alpha\gamma^{-1}\big((x(\alpha\beta\alpha)^{-1}\gamma\big)\\
\Leftrightarrow (xy)\sigma\alpha\gamma^{-1}(x\delta)=x\cdot(y\cdot\sigma\alpha\gamma^{-1}(x\delta)\label{eq:38}
\end{gather}
where $\delta=(\alpha\beta\alpha)^{-1}\gamma\in A(Q)$. So, by \eqref{eq:37.1} to \eqref{eq:38}, $(H,\circ)$ is $\sigma'$-flexible if and only if $(Q,\cdot)$ is $\sigma\alpha\gamma^{-1}$-flexible.

Now, following \eqref{eq:37}, $(H,\circ)$ is a $\sigma'$-generalised
Bol loop if and only if
\begin{gather}
(\mathbb{R}_{(\alpha,x)}, \mathbb{L}_{(\alpha,x)}^{-1}\mathbb{R}_{\sigma'(\alpha,x)}^{-1}, \mathbb{R}_{\sigma'(\alpha,x)}^{-1})\in AUT(Q,\cdot)\\
\Leftrightarrow(\beta,y)\mathbb{R}_{(\alpha,x)}\circ (\gamma, z)\mathbb{L}_{(\alpha,x)}^{-1}\mathbb{R}_{\sigma'(\alpha,x)}^{-1}=[(\beta,y)\circ(\gamma, z)]\mathbb{R}_{\sigma'(\alpha,x)}^{-1}\label{eq:39}
\end{gather}
Let $(\gamma, z)\mathbb{L}_{(\alpha,x)}^{-1}\mathbb{R}_{\sigma'(\alpha,x)}^{-1}=(\mu,u)$ in \eqref{eq:39}, then
$(\gamma,z)=\big(\alpha\mu\alpha,x\mu\alpha(u\alpha\cdot\sigma(x))\big)\Rightarrow \gamma=\alpha\mu\alpha$ and
$z=x\mu\alpha(u\alpha\cdot\sigma(x))$. Consequently,
\begin{equation}\label{eq:40}
\mu=\alpha^{-1}\gamma\alpha^{-1}~\textrm{and}~u=zL_{(x\alpha^{-1}\gamma)}^{-1}R_{(\sigma(x))^{-1}}\alpha^{-1}
=\Big[\big((x\alpha^{-1}\gamma)\backslash z\big)(\sigma(x))^{-1}\Big]\alpha^{-1}
\end{equation}
Also, if $[(\beta,y)\circ(\gamma, z)]\mathbb{R}_{\sigma'(\alpha,x)}^{-1}=(\beta\gamma,y\gamma\cdot z)\mathbb{R}_{(\alpha,\sigma(x))^{-1}}=(\tau,v)$
in \eqref{eq:39}, then
\begin{equation}\label{eq:41}
(\tau,v)=\Big(\beta\gamma\alpha^{-1},(y\gamma\cdot z)\alpha^{-1}\cdot\big((\sigma(x))^{-1}\big)\alpha^{-1}\Big)
\end{equation}
Substituting \eqref{eq:40} and \eqref{eq:41} into \eqref{eq:39}, we get
\begin{gather}
[(\beta,y)\circ (\alpha,x)]\circ(\mu,u)= (\tau,v)\Leftrightarrow
(\beta\alpha\mu, (y\alpha\cdot x)\mu\cdot u)= (\tau,v)\\
\Leftrightarrow
\Big(\beta\gamma\alpha^{-1},(y\alpha\cdot
x)\alpha^{-1}\gamma\alpha^{-1}\cdot \Big([(x\alpha^{-1}\gamma)\backslash z](\sigma(x))^{-1}\Big)\alpha^{-1}\Big)\\=
\Big(\beta\gamma\alpha^{-1},(y\gamma\cdot z)\alpha^{-1}\cdot\alpha^{-1}\big((\sigma(x))^{-1}\big)\Big)\\
\Leftrightarrow\Big\{(y\alpha\cdot
x)\alpha^{-1}\gamma\cdot \Big([(x\alpha^{-1}\gamma)\backslash z](\sigma(x))^{-1}\Big)\Big\}\alpha^{-1}=
\Big[(y\gamma\cdot z)(\sigma(x))^{-1}\Big]\alpha^{-1}\\
\Leftrightarrow(y\alpha\cdot
x)\alpha^{-1}\gamma\cdot \Big([(x\alpha^{-1}\gamma)\backslash z](\sigma(x))^{-1}\Big)=
(y\gamma\cdot z)(\sigma(x))^{-1}\\
\Leftrightarrow (y\gamma\cdot
x\alpha^{-1}\gamma)\cdot \Big([(x\alpha^{-1}\gamma)\backslash z](\sigma(x))^{-1}\Big)=
(y\gamma\cdot z)(\sigma(x))^{-1}\\
\Leftrightarrow  \bar{y}R_{\bar{x}}\cdot zL_{\bar{x}}^{-1}R_{[\sigma(\bar{x}\gamma^{-1}\alpha)]}^{-1}=
(\bar{y}z)R_{[\sigma(\bar{x}\gamma^{-1}\alpha)]}^{-1}\\
\Leftrightarrow \Big(R_{\bar{x}},L_{\bar{x}}^{-1}R_{[\sigma(\bar{x}\gamma^{-1}\alpha)]}^{-1},
R_{[\sigma(\bar{x}\gamma^{-1}\alpha)]}^{-1}\Big)\in AUT(Q,\cdot)\label{eq:42}
\end{gather}
where $\bar{y}=y\gamma$ and $\bar{x}=x\alpha^{-1}\gamma$.
Based on \eqref{eq:38} and the reverse of the procedure from \eqref{eq:35.1} to \eqref{eq:36}, \eqref{eq:42} is true if and only if $(Q,\cdot)$ is a $\sigma\alpha\gamma^{-1}$-generalised Bol
loop.

$\therefore~(Q,\cdot)$ is a $\sigma\alpha\gamma^{-1}$-generalised flexible-Bol
loop if and only if $(H,\circ)$ is a $\sigma'$-generalised
flexible-Bol loop.\qed
\end{proof}

\begin{myth}\label{7}
Let $(Q,\cdot)$ be a loop with a self-map $\sigma$ and let $(H,\circ )$ be the holomorph of $(Q,\cdot)$ with a self-map $\sigma'$ such that $\sigma'~:~(\alpha,x)\mapsto (\alpha,\sigma\gamma\alpha^{-1}(x))$ for all $(\alpha,x)\in H$. Then, for any $\gamma\in A(Q)$, $(Q,\cdot)$ is a $\sigma$-generalised flexible-Bol loop if and only if $(H,\circ)$ is a $\sigma'$-generalised flexible-Bol loop.
\end{myth}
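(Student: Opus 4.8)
The plan is to follow the proof of Theorem~\ref{6} line for line, the sole change being that the second coordinate of $\sigma'$ now reads $\sigma\gamma\alpha^{-1}(x)=\sigma(x\alpha^{-1}\gamma)$ in place of $\sigma(x)$; the extra factor $\gamma\alpha^{-1}$ is tailored precisely so that the self-map which Theorem~\ref{6} produced on $(Q,\cdot)$, namely $\sigma\alpha\gamma^{-1}$, collapses back to $\sigma$. Exactly as there, Lemma~\ref{alpha1} and Lemma~\ref{alpha2} let me replace the assertion that $(H,\circ)$ is a $\sigma'$-generalised Bol loop by the autotopism statement $(\mathbb{R}_{(\alpha,x)^{-1}},\mathbb{L}_{(\alpha,x)}\mathbb{R}_{\sigma'(\alpha,x)},\mathbb{R}_{\sigma'(\alpha,x)})\in AUT(H,\circ)$ for all $(\alpha,x)\in H$, and the assertion that $(H,\circ)$ is $\sigma'$-flexible by the commuting relation $\mathbb{L}_{(\alpha,x)}\mathbb{R}_{\sigma'(\alpha,x)}=\mathbb{R}_{\sigma'(\alpha,x)}\mathbb{L}_{(\alpha,x)}$, as in \eqref{eq:35.1}--\eqref{eq:37.1}. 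Throughout I would abbreviate $p=\sigma(x\alpha^{-1}\gamma)$ for the second coordinate of $\sigma'(\alpha,x)$.

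First I would settle flexibility. Expanding $(\alpha,x)(\beta,y)\circ\sigma'(\alpha,x)=(\alpha,x)\circ(\beta,y)\sigma'(\alpha,x)$ with the holomorph product and equating second coordinates gives $(x\beta\alpha\cdot y\alpha)\,p=x\beta\alpha\cdot(y\alpha\cdot p)$, using that $\alpha\in A(Q)$. Writing $u=x\beta\alpha$ and $v=y\alpha$, the relation $x=u\alpha^{-1}\beta^{-1}$ forces $x\alpha^{-1}\gamma=u\delta$ with $\delta=(\alpha\beta\alpha)^{-1}\gamma\in A(Q)$, so that $p=\sigma(u\delta)$ and the identity becomes $uv\cdot\sigma(u\delta)=u\cdot v\sigma(u\delta)$, which is the $\sigma$-flexibility of $(Q,\cdot)$. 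Here the built-in $\gamma\alpha^{-1}$ is exactly what sends the Theorem~\ref{6} output $\sigma\alpha\gamma^{-1}$ back to $\sigma$.

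Next I would rerun the generalised Bol computation \eqref{eq:37}--\eqref{eq:42} with $\sigma(x)$ replaced by $p=\sigma(x\alpha^{-1}\gamma)$ everywhere it occurs. Solving $(\gamma,z)\mathbb{L}_{(\alpha,x)}^{-1}\mathbb{R}_{\sigma'(\alpha,x)}^{-1}=(\mu,u)$ and $[(\beta,y)\circ(\gamma,z)]\mathbb{R}_{\sigma'(\alpha,x)}^{-1}=(\tau,v)$ gives the same first coordinates $\mu=\alpha^{-1}\gamma\alpha^{-1}$ and $\tau=\beta\gamma\alpha^{-1}$ as before, with $p$ occupying the slot held by $\sigma(x)$ in the second coordinates. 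Substituting into \eqref{eq:39} and normalising with $\bar x=x\alpha^{-1}\gamma$ and $\bar y=y\gamma$, the surviving identity carries $\sigma$ evaluated at $x\alpha^{-1}\gamma=\bar x$ (rather than at $\bar x\gamma^{-1}\alpha$ as in Theorem~\ref{6}), and therefore delivers $(R_{\bar x},L_{\bar x}^{-1}R_{\sigma(\bar x)}^{-1},R_{\sigma(\bar x)}^{-1})\in AUT(Q,\cdot)$. This triple is the inverse of $(R_{\bar x}^{-1},L_{\bar x}R_{\sigma(\bar x)},R_{\sigma(\bar x)})$ (cf.\ \eqref{eq:35.1}), so by Lemma~\ref{alpha2} it lies in $AUT(Q,\cdot)$ for all $\bar x$ if and only if $(Q,\cdot)$ is a $\sigma$-generalised Bol loop. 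Combining the two halves gives the stated equivalence.

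I expect the only genuine work to be the bookkeeping of the automorphism factors and the verification that, after the normalisation $\bar x=x\alpha^{-1}\gamma$, the argument of $\sigma$ is exactly $\bar x$, i.e.\ that the shift responsible for $\sigma\alpha\gamma^{-1}$ in Theorem~\ref{6} is now absorbed. A secondary point to watch is the quantifier on the companion $\delta$: as $(\alpha,\beta)$ ranges over $A(Q)\times A(Q)$ the element $\delta=(\alpha\beta\alpha)^{-1}\gamma$ ranges over all of $A(Q)$, so matching the $\sigma'$-flexibility of $(H,\circ)$ with the $\sigma$-flexibility of $(Q,\cdot)$ requires reading the definition of $\sigma$-flexibility uniformly in $\delta$, precisely as was tacitly done in Theorem~\ref{6}.
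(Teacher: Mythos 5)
Your proposal is correct and follows essentially the same route as the paper: the paper's own proof of Theorem~\ref{7} is literally the instruction to repeat the argument of Theorem~\ref{6} with the new self-map, and your line-by-line rerun --- showing that the factor $\gamma\alpha^{-1}$ built into $\sigma'$ makes the Theorem~\ref{6} output $\sigma\alpha\gamma^{-1}$ collapse to $\sigma$, both in the flexibility identity (via $\delta=(\alpha\beta\alpha)^{-1}\gamma$) and in the final autotopism of the form \eqref{eq:42} --- is exactly that expansion. Your closing caveats (the uniform reading of the companion $\delta$, and the identification of the generic first coordinate with the fixed $\gamma$ when normalising $\bar{x}=x\alpha^{-1}\gamma$) coincide with the conventions the paper itself uses tacitly in Theorem~\ref{6}, so your attempt is faithful to, and no less rigorous than, the paper's own proof.
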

\begin{proof}
The proof of this follows in the sense of Theorem~\ref{6}.
\end{proof}

\begin{myth}\label{a1}
Let $(Q,\cdot)$ be a generalised Bol
loop. If a mapping $\alpha\in BS(Q,\cdot)$ such that $\alpha = \psi R_x$ where $\psi:e\mapsto e$, then $\psi$ is a unique pseudo-automorphism with companion $xg^{-1}\cdot \sigma (x)$ for some $g\in Q$ and for all $x\in Q$.
\end{myth}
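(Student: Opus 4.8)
The plan is to work entirely at the level of autotopisms, combining the two tools already isolated for this purpose: Bruck's lemma for right inverse property loops (Lemma~\ref{alpha3}) and the generalized Bol autotopism (Lemma~\ref{alpha2}). Uniqueness I would dispatch first, since it is immediate: because $e\psi=e$, evaluating $\alpha=\psi R_x$ at $e$ gives $e\alpha=(e\psi)\cdot x=x$, so $x=e\alpha$ is forced by $\alpha$, and consequently $\psi=\alpha R_x^{-1}$ is forced as well. Thus at most one such $\psi$ can exist, and the whole content of the theorem lies in showing that this $\psi$ is a pseudo-automorphism and in locating its companion.

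For the substantive part I would start from $\alpha\in BS(Q,\cdot)$, which by Theorem~\ref{a1:3l} supplies $f,g\in Q$ with $(\psi R_x R_g^{-1},\ \psi R_x L_f^{-1},\ \psi R_x)\in AUT(Q,\cdot)$. Since a generalized Bol loop is a RIPL by Theorem~\ref{1}(1), I may apply Bruck's Lemma~\ref{alpha3} with $(U,V,W)=(\psi R_x R_g^{-1},\psi R_x L_f^{-1},\psi R_x)$ in order to bring $\alpha=\psi R_x$ into the first coordinate, obtaining $(\psi R_x,\ J\psi R_x L_f^{-1}J,\ \psi R_x R_g^{-1})\in AUT(Q,\cdot)$. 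Notice that the companion we are after, $xg^{-1}\cdot\sigma(x)$, already contains both $g$ (from membership in $BS$) and $\sigma(x)$ (from the Bol structure at the \emph{same} $x$), which signals that the generalized Bol autotopism attached to $x$ is the right thing to multiply by.

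The heart of the argument is to right-multiply this autotopism by $\beta_x=(R_x^{-1},L_xR_{\sigma(x)},R_{\sigma(x)})$ from Lemma~\ref{alpha2}. In the first coordinate the factor $R_x^{-1}$ cancels the $R_x$ and leaves exactly $\psi$, while in the third coordinate it builds $\psi R_x R_g^{-1}R_{\sigma(x)}$. The decisive observation, which I expect to be the crux, is that $R_x R_g^{-1}R_{\sigma(x)}=R_{xg^{-1}\cdot\sigma(x)}$: using the right inverse property to write $R_g^{-1}=R_{g^{-1}}$ and then reading the generalized Bol identity \eqref{eq:8} under the substitution $(x,y,z)\mapsto(w,x,g^{-1})$ yields $(wx\cdot g^{-1})\sigma(x)=w\big(xg^{-1}\cdot\sigma(x)\big)$ for every $w\in Q$, which is precisely the equality of these two right translations. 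Hence the product autotopism takes the form $(\psi,\ B,\ \psi R_c)$ with $c=xg^{-1}\cdot\sigma(x)$.

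To finish, I would invoke $e\psi=e$ one last time: evaluating the autotopism $(\psi,B,\psi R_c)$ at a pair with first entry $e$ gives $e\cdot vB=v(\psi R_c)$, so $B=\psi R_c$. Therefore $(\psi,\psi R_c,\psi R_c)\in AUT(Q,\cdot)$, which is exactly the statement that $\psi$ is a pseudo-automorphism with companion $c=xg^{-1}\cdot\sigma(x)$. The only genuine difficulty is recognizing the correct sequence of moves — Bruck's lemma to expose $\alpha$ in the first slot, followed by $\beta_x$ to simultaneously strip $R_x$ from the first coordinate and assemble the companion in the third — after which the identification of $c$ via \eqref{eq:8} and the forcing $B=\psi R_c$ are routine, and $f$ never needs to be computed explicitly.
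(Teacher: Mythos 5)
Your proposal is correct and follows essentially the same route as the paper: apply Bruck's Lemma~\ref{alpha3} to the Bryant--Schneider autotopism $(\alpha R_g^{-1},\alpha L_f^{-1},\alpha)$, right-multiply by the generalized Bol autotopism $(R_x^{-1},L_xR_{\sigma(x)},R_{\sigma(x)})$ of Lemma~\ref{alpha2}, use $e\psi=e$ to force the middle component, and collapse $R_xR_{g^{-1}}R_{\sigma(x)}=R_{xg^{-1}\cdot\sigma(x)}$ via the generalized Bol identity. The only (harmless) deviations are that you identify the companion before evaluating at $e$ rather than after, and your uniqueness argument ($x=e\alpha$ is forced, hence so is $\psi=\alpha R_x^{-1}$) is a more direct version of the paper's comparison of two decompositions $\psi_1R_{x_1}=\psi_2R_{x_2}$.
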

\begin{proof}
If $\alpha \in BS(Q,\cdot)$, then $(\alpha
R_g^{-1}, \alpha L_f^{-1},\alpha)\in AUT(Q,\cdot)$ for some $f,g\in
Q$. So, applying Lemma~\ref{alpha3}, $(\alpha,J\alpha
L_f^{-1}J,\alpha R_{g^{-1}})\in AUT(Q,\cdot)$ for
some $f,g\in Q$. Since, $(R_{x^{-1}},
L_xR_{\sigma(x)},R_{\sigma(x)})\in AUT(Q,\cdot)$ for all $x\in Q$, then
\begin{gather}
(\alpha,J\alpha L_f^{-1}J,\alpha
R_{g^{-1}})(R_x^{-1},L_xR_{\sigma(x)},R_{\sigma(x)})=\label{aeq:13.1}\\(\alpha R_{x^{-1}},J\alpha L_f^{-1}J
L_xR_{\sigma(x)},
\alpha R_{g^{-1}}R_{\sigma(x)})\in AUT(Q,\cdot)\label{aeq:13}
\end{gather}
Let $\theta=J\alpha L_f^{-1}JL_xR_{\sigma(x)}$. Then, (\ref{aeq:13}) becomes
\begin{equation}\label{aeq:14}
u\alpha R_{x^{-1}}\cdot v\theta = (u\cdot v)\alpha
R_{g^{-1}}
R_{\sigma(x)}
\end{equation}
for all $u,v\in Q$.
If $\alpha=\psi R_x$, then $\alpha R^{-1}_{x}=\psi$.
Thus, $\theta=J\psi R_xL_f^{-1}JL_xR_{\sigma(x)}$ and (\ref{aeq:14}) becomes
\begin{equation}\label{aeq:15}
u\psi\cdot v\theta = (u\cdot v)\psi R_x
R_{g^{-1}}
R_{\sigma(x)}
\end{equation}
Let $u=e$ in (\ref{aeq:14}), then we have
$e\psi\cdot v\theta = (e\cdot v)\psi R_{x}R_{g^{-1}}R_{\sigma(x)}\Longrightarrow$
\begin{equation}\label{aeq:16}
\theta =\psi R_xR_{g^{-1}}R_{\sigma(x)}
\end{equation}
So by (\ref{aeq:15}) and (\ref{aeq:14}), (\ref{aeq:13}) becomes
$$(\psi,\theta,\psi R_xR_{g^{-1}}R_{\sigma(x)})= \langle\psi,\psi R_xR_{g^{-1}}R_{\sigma(x)},\psi
R_xR_{g^{-1}}R_{\sigma(x)})\in AUT(Q,\cdot)$$ for all $x\in Q$ and some $g\in Q$.
Since $(Q,\cdot)$ is a generalised Bol loop,
$R_xR_{g^{-1}}R_{\sigma(x)}= R_{xg^{-1}\cdot\sigma(x)}$.
Hence,
\begin{equation}\label{aeq:17}
(\psi,\psi R_{xg^{-1}\cdot \sigma(x)},\psi
R_{xg^{-1}\cdot \sigma(x)})\in AUT(Q,\cdot).
\end{equation}
for all $x\in Q$ and some $g\in Q$.
Thus, $\psi$ is a pseudo-automorphism with a companion
$xg^{-1}\sigma(x)$.  \\

Let $\psi_1R_{x_1}=\psi_2R_{x_2}$ where $\psi_1,\psi_2:e\mapsto e$ and $x_1,x_2\in Q$. Then,
$R_{x_1}R^{-1}_{x_2} = \psi^{-1}_1\psi_2$. So,
$eR_{x_1}R^{-1}_{x_2}=e\psi^{-1}_{1}\psi_2$, thus, $x_1x^{-1}_2=e$.
Hence $x_1=x_2$, so $\psi_1=\psi_2$. And this implies
that for all $x\in Q$, there exists a unique $\psi$ such that $\alpha = \psi R_x$.\\
Therefore, $\alpha = \psi R_x$ if and only if $\psi\in
PS(Q,\cdot)$ with companion $xg^{-1}\cdot\sigma(x)$ for some $g\in Q$ and all $x\in Q$.\qed
\end{proof}

\begin{mycor}\label{a2}
Let $(Q,\cdot)$ be a $\sigma$-generalised Bol
loop with $\sigma~:~x\mapsto (xg^{-1})^{-1}$ for all $x\in Q$ and some $g\in Q$. If a mapping $\alpha\in BS(Q,\cdot)$ such that $\alpha = \psi R_x$ where $\psi:e\mapsto e$, then $\psi\in AUM(Q,\cdot)$ is unique.
\end{mycor}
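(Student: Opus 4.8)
The plan is to obtain this as a direct consequence of Theorem~\ref{a1}, whose hypotheses are precisely those of the corollary. By that theorem, for $\alpha = \psi R_x$ with $\psi:e\mapsto e$, the map $\psi$ is a unique pseudo-automorphism of $(Q,\cdot)$ with companion $xg^{-1}\cdot\sigma(x)$ for some $g\in Q$ and all $x\in Q$; by the definition of a pseudo-automorphism this means $(\psi,\psi R_c,\psi R_c)\in AUT(Q,\cdot)$, where $c=xg^{-1}\cdot\sigma(x)$. The entire argument therefore reduces to showing that, for the prescribed form of $\sigma$, the companion $c$ collapses to the identity $e$.

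The key computation is to insert $\sigma(x)=(xg^{-1})^{-1}$ into the companion, taking the $g$ defining $\sigma$ to be the same $g$ furnished by the autotopism structure in Theorem~\ref{a1}. Writing $a=xg^{-1}$, the companion becomes
\[
c = xg^{-1}\cdot\sigma(x) = a\cdot a^{-1}.
\]
Since a $\sigma$-generalised Bol loop is a RIPL with $a^{\lambda}=a^{\rho}=a^{-1}$ by Theorem~\ref{1}, we have $a\cdot a^{-1}=e$, so that $c=e$.

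It then remains only to feed $c=e$ back into the autotopism. As $R_e=I$, the triple $(\psi,\psi R_c,\psi R_c)=(\psi,\psi,\psi)$ lies in $AUT(Q,\cdot)$, which is exactly the assertion that $\psi$ is an automorphism, i.e. $\psi\in AUM(Q,\cdot)$. Uniqueness is inherited verbatim from Theorem~\ref{a1}, where the relation $\psi_1R_{x_1}=\psi_2R_{x_2}$ with $\psi_i:e\mapsto e$ was already shown to force $\psi_1=\psi_2$.

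I expect the only delicate point to be the identification of the two occurrences of $g$ — the one defining the self-map $\sigma$ and the one arising from $\alpha\in BS(Q,\cdot)$ in Theorem~\ref{a1} — since it is exactly this matching that makes the companion telescope to $a\cdot a^{-1}$. Granting that identification (as the shared notation in the statement clearly intends), the rest is the routine fact $a\cdot a^{-1}=e$, which is legitimate here thanks to the right inverse property and the coincidence $a^{\lambda}=a^{\rho}$ recorded in Theorem~\ref{1}.
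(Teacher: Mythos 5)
Your proposal is correct and follows essentially the same route as the paper: both invoke the autotopism $(\psi,\psi R_{xg^{-1}\cdot\sigma(x)},\psi R_{xg^{-1}\cdot\sigma(x)})\in AUT(Q,\cdot)$ from Theorem~\ref{a1} (the paper cites its internal equation~\eqref{aeq:17}), substitute $\sigma(x)=(xg^{-1})^{-1}$ with the same $g$ so that the companion collapses to $xg^{-1}\cdot(xg^{-1})^{-1}=e$, and conclude $(\psi,\psi,\psi)\in AUT(Q,\cdot)$. Your explicit justification of $a\cdot a^{-1}=e$ via Theorem~\ref{1} and your flagging of the identification of the two occurrences of $g$ are points the paper leaves implicit, but the argument is the same.
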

\begin{proof}
Using (\ref{aeq:17}), $(\psi,\psi R_{xg^{-1}\cdot \sigma(x)},\psi R_{xg^{-1}\cdot \sigma(x)})
=(\psi,\psi R_{xg^{-1}\cdot (xg^{-1})^{-1}},\psi R_{xg^{-1}\cdot (xg^{-1})^{-1}})=(\psi,\psi,\psi)\in AUT(Q,\cdot)$. Thus, $\psi$ is an automorphism of $Q$.\qed
\end{proof}

\begin{myth}\label{a3}
Let $(Q,\cdot)$ be a $\sigma$-generalised Bol
loop in which $\sigma \big(x^{-1}\big)=(\sigma(x))^{-1}$ and $xy\cdot \sigma(x)=x\cdot y\sigma(x)$ for all $x,y\in Q$. If a mapping $\alpha\in BS(Q,\cdot)$ such that $\alpha = \psi R_x^{-1}$ where $\psi:e\mapsto e$, then $\psi$ is a unique pseudo-automorphism with companion $x^{-1}g^{-1}\cdot (\sigma (x))^{-1}$ for some $g\in Q$ and for all $x\in Q$.
\end{myth}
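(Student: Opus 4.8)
The plan is to run the argument of Theorem~\ref{a1} almost verbatim, replacing the decomposition $\alpha=\psi R_x$ by $\alpha=\psi R_x^{-1}$ and evaluating the auxiliary generalised-Bol autotopism at $x^{-1}$ rather than at $x$. First I would record, exactly as there, that $\alpha\in BS(Q,\cdot)$ yields $(\alpha R_g^{-1},\alpha L_f^{-1},\alpha)\in AUT(Q,\cdot)$ for some $f,g\in Q$; since a generalised Bol loop is a RIPL by Theorem~\ref{1}, Lemma~\ref{alpha3} transposes this to $(\alpha,J\alpha L_f^{-1}J,\alpha R_{g^{-1}})\in AUT(Q,\cdot)$.

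The decisive step is the choice of the second autotopism. Evaluating Lemma~\ref{alpha2} at $x^{-1}$ gives $(R_{x^{-1}}^{-1},L_{x^{-1}}R_{\sigma(x^{-1})},R_{\sigma(x^{-1})})\in AUT(Q,\cdot)$. Using $x^{\lambda}=x^{\rho}=x^{-1}$ (Theorem~\ref{1}) together with the RIP one has $R_{x^{-1}}^{-1}=R_x$, and the hypothesis $\sigma(x^{-1})=(\sigma(x))^{-1}$ rewrites the triple as $(R_x,L_{x^{-1}}R_{(\sigma(x))^{-1}},R_{(\sigma(x))^{-1}})$. Multiplying the transposed autotopism above by this one, the first component collapses to $\alpha R_x=\psi R_x^{-1}R_x=\psi$ (the map inverse $R_x^{-1}=R_{x^{-1}}$ satisfies $R_{x^{-1}}R_x=I$ by the RIP at $x^{-1}$), while the third component becomes $\psi R_{x^{-1}}R_{g^{-1}}R_{(\sigma(x))^{-1}}$. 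To read off the companion I would then collapse this product of three right translations: specialising the generalised Bol identity \eqref{eq:8} to $y=x^{-1}$ gives $R_{x^{-1}}R_zR_{\sigma(x^{-1})}=R_{x^{-1}z\cdot\sigma(x^{-1})}$ for all $z$, and with $z=g^{-1}$ and $\sigma(x^{-1})=(\sigma(x))^{-1}$ this reads $R_{x^{-1}}R_{g^{-1}}R_{(\sigma(x))^{-1}}=R_c$ with $c=x^{-1}g^{-1}\cdot(\sigma(x))^{-1}$. Thus the third component is $\psi R_c$, and as in Theorem~\ref{a1} evaluating the resulting autotopism equation at the first entry $u=e$ and using $\psi:e\mapsto e$ forces the middle component to equal $\psi R_c$ as well, so that $(\psi,\psi R_c,\psi R_c)\in AUT(Q,\cdot)$ exhibits $\psi$ as a pseudo-automorphism with the claimed companion. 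Note that it is exactly the hypothesis $\sigma(x^{-1})=(\sigma(x))^{-1}$ that converts the naive companion $x^{-1}g^{-1}\cdot\sigma(x^{-1})$ into the stated symmetric form.

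The point I expect to be the main obstacle is pinning down the precise role of the flexibility hypothesis $xy\cdot\sigma(x)=x\cdot y\sigma(x)$. In the route sketched above the pseudo-automorphism conclusion already follows from the generalised Bol structure, the RIP, and $\sigma(x^{-1})=(\sigma(x))^{-1}$ alone, because the $u=e$ collapse fixes the middle slot without any appeal to flexibility. Since the authors nevertheless impose it, the thing I would check most carefully is whether their bookkeeping genuinely consumes it — most plausibly through the commuting relation $L_xR_{\sigma(x)}=R_{\sigma(x)}L_x$ of \eqref{eq:35.2}--\eqref{eq:36}, used to identify the transposed left translation $J\alpha L_f^{-1}JL_{x^{-1}}R_{(\sigma(x))^{-1}}$ with a single right translation — or whether it is in fact redundant for this statement and retained only to keep the result inside the flexible-Bol framework.

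Finally, uniqueness is routine and identical to Theorem~\ref{a1}: from $\psi_1R_{x_1}^{-1}=\psi_2R_{x_2}^{-1}$ with $\psi_1,\psi_2$ fixing $e$ one obtains $\psi_2^{-1}\psi_1=R_{x_2}^{-1}R_{x_1}$, and evaluating at $e$ gives $x_2^{-1}x_1=e$, whence $x_1=x_2$ and therefore $\psi_1=\psi_2$; so the pseudo-automorphism $\psi$ attached to $\alpha$ by the decomposition $\alpha=\psi R_x^{-1}$ is unique.
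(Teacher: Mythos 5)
Your proof is correct, and it differs from the paper's at exactly the one step that matters. The paper obtains its auxiliary autotopism by \emph{inverting} the generalised-Bol autotopism at $x$, writing $(R_{x^{-1}},L_xR_{\sigma(x)},R_{\sigma(x)})^{-1}=(R_x,L_x^{-1}R_{(\sigma(x))^{-1}},R_{(\sigma(x))^{-1}})$, and this is precisely where the flexibility hypothesis is consumed: as you conjectured, it enters through the commuting relation $L_xR_{\sigma(x)}=R_{\sigma(x)}L_x$, which (with the RIP) lets the authors rewrite $(L_xR_{\sigma(x)})^{-1}=R_{\sigma(x)}^{-1}L_x^{-1}$ in the form $L_x^{-1}R_{(\sigma(x))^{-1}}$. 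You instead specialise Lemma~\ref{alpha2} at $x^{-1}$ and use the RIP plus $\sigma(x^{-1})=(\sigma(x))^{-1}$ to produce $(R_x,L_{x^{-1}}R_{(\sigma(x))^{-1}},R_{(\sigma(x))^{-1}})$, never touching flexibility. From that point the two arguments coincide step for step: multiply by the transposed Bryant--Schneider autotopism from Lemma~\ref{alpha3}, substitute $\alpha=\psi R_x^{-1}$ so the first slot collapses to $\psi$, pin the middle slot by evaluating at $u=e$, and collapse $R_{x^{-1}}R_{g^{-1}}R_{(\sigma(x))^{-1}}$ to $R_{x^{-1}g^{-1}\cdot(\sigma(x))^{-1}}$ via the generalised Bol identity at $x^{-1}$ together with $\sigma(x^{-1})=(\sigma(x))^{-1}$; the uniqueness argument is identical to Theorem~\ref{a1}. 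Your suspicion of redundancy is also vindicated: because the middle component of the product autotopism is ultimately determined by the $u=e$ evaluation whatever form it takes, the paper's proof would go through verbatim carrying $R_{(\sigma(x))^{-1}}L_x^{-1}$ instead of $L_x^{-1}R_{(\sigma(x))^{-1}}$, so flexibility is dispensable there too. What your route buys is an honest weakening of the hypotheses (the conclusion needs only the generalised Bol structure, the RIP it implies, and $\sigma(x^{-1})=(\sigma(x))^{-1}$); what the paper's route buys is uniformity with the flexible-Bol machinery of Theorems~\ref{6} and~\ref{7}, where the inverted autotopism and the commuting relation are genuinely load-bearing.
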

\begin{proof}
Lemma~\ref{alpha1} and Lemma~\ref{alpha2}, $(Q,\cdot)$ is a generalised Bol loop if and only
if $(R_{x^{-1}}, L_xR_{\sigma(x)}, R_{\sigma(x)})\in AUT(Q,\cdot)$
for all $x\in Q$. Since $xy\cdot \sigma(x)=x\cdot y\sigma(x)$, then $(R_{x^{-1}},L_xR_{\sigma(x)},
R_{\sigma(x)})^{-1}=(R_x, L_x^{-1}
R_{(\sigma(x))^{-1}}, R_{(\sigma(x))^{-1}})\in AUT(Q,\cdot)$.
$\alpha\in BS(Q,\cdot)\Longleftrightarrow(\alpha R^{-1}_g , \alpha
L^{-1}_f,\alpha)\in AUT(Q,\cdot)\Longrightarrow (\alpha,J\alpha
L_{f^{-1}}J,\alpha R^{-1}_g)\in AUT(Q,\cdot)$
for some $g,f\in Q$ by Lemma~\ref{alpha3}.
Now, the product
\begin{gather}
(\alpha,J\alpha L_{f}^{-1}J,\alpha R_g^{-1})(R_x,L_{x}^{-1}R_{(\sigma(x))^{-1}},
R_{(\sigma(x))^{-1}}) = \\(\alpha R_x,J\alpha L_{f}^{-1}J
L_{x}^{-1}R_{(\sigma(x))^{-1}},\alpha
R_g^{-1}R_{(\sigma(x))^{-1}})\in AUT(Q,\cdot)\label{aeq:18}
\end{gather}
for all $x\in Q$ and some $g,f\in Q$.
Substituting $\alpha=\psi R_x^{-1}$ into (\ref{aeq:18}), we have
\begin{equation}\label{aeq:19}
(\psi,J\psi R_x^{-1}L_f^{-1}J
L_x^{-1}R_{(\sigma(x))^{-1}},
\psi R_x^{-1}R_g^{-1}R_{(\sigma(x))^{-1}})\in AUT(Q,\cdot)
\end{equation}
for all $x\in Q$ and some $g\in Q$.
Now, for all $y,z\in Q$
\begin{equation}\label{aeq:20}
y\psi\cdot zJ\psi R_x^{-1}L_f^{-1}J
L_x^{-1}R_{(\sigma(x))^{-1}}=(yz)\psi R_x^{-1}R_g^{-1}R_{(\sigma(x))^{-1}}
\end{equation}
Putting $y=e$ in (\ref{aeq:20}), we have
\begin{equation}\label{aeq:21}
J\psi R_x^{-1}L_f^{-1}J
L_x^{-1}R_{(\sigma(x))^{-1}}
=\psi R_x^{-1}R_g^{-1}R_{(\sigma(x))^{-1}}
\end{equation}
for all $x\in Q$ and some $g\in Q$.
Thus, using (\ref{aeq:21}) in (\ref{aeq:20}),
\begin{equation}\label{aeq:22}
(\psi,\psi R_x^{-1}R_g^{-1}R_{(\sigma(x))^{-1}},
\psi R_x^{-1}R_g^{-1}R_{(\sigma(x))^{-1}})\in AUT(Q,\cdot)
\end{equation}
for all $x\in Q$ and some $g\in Q$.

Since $(Q,\cdot)$ is a
generalised Bol loop,
$$R_{x^{-1}}R_{g^{-1}}R_{(\sigma(x))^{-1}}=R_{x^{-1}g^{-1}\cdot (\sigma(x))^{-1}}.$$
Hence,
\begin{equation}\label{aeq:23}
\big(\psi,\psi R_{x^{-1}g^{-1}\cdot (\sigma(x))^{-1}},
\psi R_{x^{-1}g^{-1}\cdot (\sigma(x))^{-1}}\big)\in AUT(Q,\cdot).
\end{equation}
The proof the uniqueness of $\psi$ is similar to that in Theorem~\ref{a1}.
Therefore, $\psi$ is a unique pseudo-automorphism of $(Q,\cdot)$ with companion
$x^{-1}g^{-1}\cdot(\sigma(x))^{-1}$.\qed
\end{proof}

\begin{mycor}\label{a4}
Let $(Q,\cdot)$ be a $\sigma$-generalised Bol
loop and an AIPL in which $\sigma \big(x^{-1}\big)=(\sigma(x))^{-1}$ and $xy\cdot \sigma(x)=x\cdot y\sigma(x)$ for all $x,y\in Q$ where   $\sigma~:~x\mapsto (xg)^{-1}$ for all $x\in Q$ and some $g\in Q$. If a mapping $\alpha\in BS(Q,\cdot)$ such that $\alpha = \psi R_x^{-1}$ where $\psi:e\mapsto e$, then $\psi\in AUM(Q,\cdot)$ is unique.
\end{mycor}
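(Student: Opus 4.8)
The plan is to treat this as the direct analogue of Corollary~\ref{a2}, specialising to the identity the companion that Theorem~\ref{a3} already manufactures. Since $(Q,\cdot)$ meets every hypothesis of Theorem~\ref{a3} — it is a $\sigma$-generalised Bol loop with $\sigma(x^{-1})=(\sigma(x))^{-1}$ and $xy\cdot\sigma(x)=x\cdot y\sigma(x)$ — that theorem applies verbatim and yields~\eqref{aeq:23}: the unique map $\psi$ with $\psi:e\mapsto e$ and $\alpha=\psi R_x^{-1}$ is a pseudo-automorphism with companion $c=x^{-1}g^{-1}\cdot(\sigma(x))^{-1}$, i.e. $(\psi,\psi R_c,\psi R_c)\in AUT(Q,\cdot)$. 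So the only work is to show that the two extra hypotheses of the corollary force $c=e$.

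First I would exploit the new data, namely that $(Q,\cdot)$ is an AIPL and that $\sigma(x)=(xg)^{-1}$. By the automorphic inverse property~\eqref{eq:10}, $\sigma(x)=(xg)^{-1}=x^{-1}g^{-1}$. Writing $w=x^{-1}g^{-1}=\sigma(x)$, the second inverse appearing in the companion is the honest inverse $w^{-1}=(\sigma(x))^{-1}$, so $c=x^{-1}g^{-1}\cdot(\sigma(x))^{-1}=w\cdot w^{-1}$. Because every generalised Bol loop is a RIPL with $x^{\lambda}=x^{\rho}$ (Theorem~\ref{1}), the right inverse property gives $w\cdot w^{-1}=e$. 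Hence $c=e$ and $R_c=R_e=I$, and substituting back into~\eqref{aeq:23} collapses the pseudo-automorphism triple to $(\psi,\psi,\psi)\in AUT(Q,\cdot)$, which is exactly $\psi\in AUM(Q,\cdot)$.

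The uniqueness of $\psi$ transfers unchanged from Theorem~\ref{a3}: if $\psi_1 R_x^{-1}=\psi_2 R_x^{-1}$ with both maps fixing $e$, cancelling $R_x^{-1}$ forces $\psi_1=\psi_2$. I anticipate no genuine obstacle here; the whole content is the observation that the AIP, together with the prescribed form of $\sigma$, is precisely what makes the companion a telescoping product $w\cdot w^{-1}$. The one point to be careful about is to invoke the AIP on $(xg)^{-1}$ at the outset, so that $x^{-1}g^{-1}$ is visibly $\sigma(x)$ and $(\sigma(x))^{-1}$ is its genuine inverse; once that is in place the cancellation $x^{-1}g^{-1}\cdot(x^{-1}g^{-1})^{-1}=e$ is immediate and the argument closes.
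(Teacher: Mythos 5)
Your proposal is correct and follows essentially the same route as the paper: invoke Theorem~\ref{a3} (i.e.\ \eqref{aeq:23}) to get the pseudo-automorphism triple with companion $x^{-1}g^{-1}\cdot(\sigma(x))^{-1}$, substitute $\sigma(x)=(xg)^{-1}$, and observe that the AIP collapses the companion to $e$ so the triple becomes $(\psi,\psi,\psi)\in AUT(Q,\cdot)$. The only difference is that you spell out explicitly the use of the AIP and of $x^{\lambda}=x^{\rho}$ (so that $w\cdot w^{-1}=e$), steps the paper leaves implicit in its one-line substitution.
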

\begin{proof}
Using \eqref{aeq:23},
\begin{gather*}
\big(\psi,\psi R_{x^{-1}g^{-1}\cdot (\sigma(x))^{-1}},
\psi R_{x^{-1}g^{-1}\cdot (\sigma(x))^{-1}}\big)=\big(\psi,\psi R_{x^{-1}g^{-1}\cdot ((xg)^{-1})^{-1}},\psi R_{x^{-1}g^{-1}\cdot ((xg)^{-1})^{-1}}\big)\\\
=(\psi,\psi,\psi)\in AUT(Q,\cdot).
\end{gather*}
Thus, $\psi$ is an automorphism of $Q$.\qed
\end{proof}

\begin{mylem}\label{a4.2}
Let $(Q,\cdot)$ be a $\sigma$-generalised
Bol loop. Then
\begin{enumerate}
\item $\sim$ is an equivalence relation over $SYM(Q)$.
\item For any $\alpha,\beta\in SYM(Q)$, $\alpha\sim\beta$ if and only if $\alpha,\beta\in TBS_1(Q,\cdot)$.
\item $\displaystyle TBS_1(Q,\cdot)=\bigcup_{[\alpha ]\in SYM(Q)/\sim}[\alpha ]$.
\end{enumerate}
\end{mylem}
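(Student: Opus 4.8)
The plan is to replace the defining relation by a cleaner equivalent form and then run everything through it. Composing $\alpha^{-1}=R_x\beta^{-1}$ with $\beta$ on the right gives $\alpha^{-1}\beta=R_x\beta^{-1}\beta=R_x$, so $\alpha\sim\beta$ is equivalent to $\beta=\alpha R_x$ for some $x\in Q$. Because a generalised Bol loop is a RIPL with $x^\lambda=x^\rho=x^{-1}$ by Theorem~\ref{1}, the inverse of a right translation is again a right translation, $R_x^{-1}=R_{x^{-1}}$; I would record this identity first, since it is exactly what keeps the witnesses inside $\mathcal R=\{R_x:x\in Q\}$ under inversion. This reformulation is the workhorse for all three parts.

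For part 1 I would check the three axioms against $\beta=\alpha R_x$. Reflexivity is immediate from $R_e=I$ (take $x=e$). Symmetry uses the RIP: from $\beta=\alpha R_x$ I get $\alpha=\beta R_x^{-1}=\beta R_{x^{-1}}$, so the witness $x^{-1}$ gives $\beta\sim\alpha$. Transitivity carries the real content: from $\beta=\alpha R_x$ and $\gamma=\beta R_y$ I obtain $\gamma=\alpha R_xR_y$, and to land back on $\gamma=\alpha R_z$ I must show that the composite $R_xR_y$ is itself a single right translation. I expect this to be the main obstacle, because closure of $\mathcal R$ under composition is precisely where the generalised-Bol hypothesis must do genuine work rather than mere symbol pushing; the tools I would bring to bear are Theorem~\ref{1}(3), namely $R_{y\cdot\sigma(y)}=R_yR_{\sigma(y)}$, together with the autotopism $(R_x^{-1},L_xR_{\sigma(x)},R_{\sigma(x)})$ of Lemma~\ref{alpha2}, aiming to rewrite $R_xR_y$ as translation by an explicit element constructed from $x$, $y$ and $\sigma$.

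For part 2 I would read off both implications from the reformulation and Definition~\ref{a1.1:3}, interpreting its right-hand side as the assertion that $\alpha$ and $\beta$ are twins of one another. If $\alpha\sim\beta$, then $\beta=\alpha R_x$, and taking the common factor $\psi=\alpha$ exhibits $\alpha=\psi R_e$ and $\beta=\psi R_x$; hence $\alpha,\beta$ are twins and both lie in $TBS_1(Q,\cdot)$. Conversely, if $\alpha=\psi R_x$ and $\beta=\psi R_y$ for a common $\psi$, then $\alpha^{-1}\beta=R_x^{-1}R_y=R_{x^{-1}}R_y$, and reducing this product to a single $R_z$ yields $\alpha\sim\beta$. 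Since that reduction is the very composition fact needed for transitivity, I would prove the converse of part 2 and the transitivity of part 1 in one stroke.

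Part 3 is then bookkeeping. Once $\sim$ is an equivalence relation, $SYM(Q)$ is the disjoint union of its classes $[\alpha]$, and by part 2 each class $[\alpha]$ is exactly the set of twin maps of $\alpha$. Since every element of $TBS_1(Q,\cdot)$ is a twin map and therefore sits in a unique class, while conversely each class consists of twin maps, the union $\bigcup_{[\alpha]\in SYM(Q)/\sim}[\alpha]$ equals $TBS_1(Q,\cdot)$. I would present this as a one-line consequence, noting only that the union is disjoint so nothing is counted twice.
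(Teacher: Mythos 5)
Your reduction of $\sim$ to $\beta=\alpha R_x$, your reflexivity and symmetry arguments (the latter resting on $R_x^{-1}=R_{x^{-1}}$, which the RIP and two-sidedness of inverses from Theorem~\ref{1} do supply), your forward half of part 2, and your part 3 are all sound. But the step you flagged and postponed --- rewriting $R_xR_y$ as a single right translation for \emph{arbitrary} $x,y\in Q$ --- is a genuine gap, and no choice of tools can close it. If $R_xR_y=R_z$, then evaluating at $e$ forces $z=xy$, so closure of $\{R_x:x\in Q\}$ under composition is equivalent to $(wx)y=w(xy)$ for all $w,x,y$, i.e.\ to $Q$ being a group. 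The generalised Bol hypothesis yields only the \emph{matched} products $R_yR_{\sigma(y)}=R_{y\cdot\sigma(y)}$ of Theorem~\ref{1}(3) (more generally $R_yR_zR_{\sigma(y)}=R_{yz\cdot\sigma(y)}$ from identity \eqref{eq:8}); it says nothing about $R_xR_y$ when $y\ne\sigma(x)$, and composing the autotopisms of Lemma~\ref{alpha2} yields autotopisms whose components are no longer of translation form. Note also that the witness is determined, not free: $\beta^{-1}=R_y\gamma^{-1}$ forces $R_y=\beta^{-1}\gamma$, hence $y=e\beta^{-1}\gamma$. Consequently transitivity genuinely fails in any non-associative $\sigma$-generalised Bol loop (such loops exist: any right Bol loop that is not a group, with $\sigma$ the identity map, or the example in Section 2, which is stated not to be a Bol loop and so is not a group): take $\beta=I$, $\alpha=R_x^{-1}$, $\gamma=R_y$ with $R_xR_y$ not a translation; then $\alpha\sim\beta$ and $\beta\sim\gamma$ but $\alpha\not\sim\gamma$. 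The same obstruction blocks the converse half of part 2, which, as you correctly observed, is the same reduction.

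The comparison with the paper's own proof is instructive. It hits exactly this wall and steps over it by fiat: in part 1 it says ``choose $y=\sigma(x)$'', and in part 2 ``take $y=x\sigma(x)$'' (forward) and ``take $y=\sigma(x^{-1})$'' (converse), applying the matched-pair identity after assuming that the determined witness happens to have the required form. Your refusal to make that move is the mathematically correct instinct; what you called the place where the hypothesis ``must do genuine work'' is in fact a hole that the hypothesis cannot fill, in the paper's argument as much as in yours. Two smaller remarks: your forward half of part 2 (take $\psi=\alpha$, so $\alpha=\psi R_e$ and $\beta=\psi R_x$) is unconditional and cleaner than the paper's, which needs the given witness to lie in the image of $x\mapsto x\cdot\sigma(x)$; and both you and the paper tacitly read part 2 as ``$\alpha$ and $\beta$ are twins \emph{of one another}'', which is the only reading under which the statement has content, since by Definition~\ref{a1.1:3} every permutation is trivially a twin map of something. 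As stated, the lemma could only be rescued by restricting the witnesses in the definition of $\sim$ (say to elements of the form $y\cdot\sigma(y)$) or by adding a closure hypothesis on $\{R_x:x\in Q\}$.
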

\begin{proof}
\begin{enumerate}
\item Let $\alpha,\beta,\gamma\in SYM(Q)$.
With $x=e$, $\alpha^{-1}=R_e\alpha^{-1}$ and so $\alpha\sim\alpha$. Thus, $\sim$ is reflexive. Let $\alpha\sim\beta$, then there exists $x\in Q$ such that $\alpha^{-1}=R_x\beta^{-1}\Longrightarrow\beta^{-1}=R_{x^{-1}}\alpha^{-1}\Longrightarrow\beta\sim\alpha$. Thus, $\sim$ is symmetric. Let $\alpha\sim\beta$ and $\beta\sim\gamma$, then there exist $x,y\in Q$ such that $\alpha^{-1}=R_x\beta^{-1}$ and $\beta^{-1}=R_y\gamma^{-1}\Longrightarrow\alpha^{-1}=R_xR_y\gamma^{-1}$. Choose $y=\sigma (x)$, so that $\alpha^{-1}=R_xR_{\sigma (x)}\gamma^{-1}=R_{x\sigma (x)}\gamma^{-1}\Longrightarrow\alpha\sim\gamma$. $\therefore~\sim$ is an equivalence relation over $SYM(Q)$.
\item Let $\alpha,\beta\in SYM(Q)$. Let $\alpha\sim\beta$, then there exists $y\in Q$ such that $\alpha^{-1}=R_y\beta^{-1}$. Take $y=x\sigma(x)$, then $\alpha^{-1}=R_{x\sigma(x)}\beta^{-1}=R_xR_{\sigma(x)}\beta^{-1}\Rightarrow\alpha R_x=\beta R_{\sigma(x)^{-1}}$.
    Say, $\alpha R_x=\beta R_{\sigma(x)^{-1}}=\psi$, then $\alpha =\psi R_{x^{-1}}$ and $\beta =\psi R_{\sigma(x)}$. So, $\alpha,\beta\in TBS_1(Q,\cdot)$.

    Let $\alpha,\beta\in TBS_1(Q,\cdot)$. Then there exist $x,y\in Q$, $\psi\in SYM(Q)$ such that $\alpha =\psi R_x$ and $\beta =\psi R_y$. This implies $\psi =\alpha R_x^{-1}=\beta R_y^{-1}\Rightarrow \alpha^{-1}=R_{x^{-1}}R_y\beta^{-1}$. Take $y=\sigma(x^{-1})$, then
    $\alpha^{-1}=R_{x^{-1}}R_{\sigma(x^{-1})}\beta^{-1}=R_{x^{-1}\sigma(x^{-1})}\beta^{-1}\Rightarrow\alpha\sim\beta$.
    \item Use 1. and 2.\qed
\end{enumerate}
\end{proof}

\begin{mylem}\label{a4.1}
Let $(Q,\cdot)$ be a loop. Then
\begin{enumerate}
\item $TBS_1(Q,\cdot)\le SYM(Q)$ if and only if $\alpha^{-1}\sim\beta^{-1}$ for any twin maps $\alpha,\beta\in SYM(Q)$.
Hence, $T_1(Q,\cdot)\le SYM(Q)$.
\item $TBS_2(Q,\cdot)\le BS(Q,\cdot )$ if and only if $\alpha^{-1}\sim\beta^{-1}$ for any twin maps $\alpha,\beta\in SYM(Q)$.
Hence, $T_2(Q,\cdot)\le PS(Q,\cdot )$.
\end{enumerate}
\end{mylem}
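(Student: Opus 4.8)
The plan is to verify the subgroup criterion for $TBS_1(Q,\cdot)$ and $TBS_2(Q,\cdot)$ and then read off the two ``hence'' clauses. Throughout I would use the one-step test: a nonempty subset $S\subseteq SYM(Q)$ is a subgroup exactly when $\mu\nu^{-1}\in S$ for all $\mu,\nu\in S$, and I would isolate the closure-under-inversion half of this test, since it is precisely that half which the relation $\sim$ controls. Before touching the subgroup test I would first fix the convention, implicit in Definition~\ref{a1.1:3} and used in Lemma~\ref{a4.2}, that a pair of twin maps $\alpha,\beta$ is written over a \emph{common} base, $\alpha=\psi R_a$ and $\beta=\psi R_b$.

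Next I would record the dictionary between $\sim$ and twinning. Directly from the definition of $\sim$, the statement $\alpha^{-1}\sim\beta^{-1}$ unwinds to $\alpha=R_z\beta$ for some $z\in Q$, that is, $\alpha\beta^{-1}=R_z$ is a right translation; and by Lemma~\ref{a4.2}(2) this is exactly the assertion that $\alpha^{-1}$ and $\beta^{-1}$ are themselves a twin pair. Hence the hypothesis ``$\alpha^{-1}\sim\beta^{-1}$ for any twin maps $\alpha,\beta$'' says precisely that inversion carries twin families to twin families. Here I would lean on the $\sigma$-generalised Bol structure exactly as Lemma~\ref{a4.2} does (through $R_{x}R_{\sigma(x)}=R_{x\sigma(x)}$), so that the right-translation normal form is available.

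For part~1 I would then argue: $TBS_1(Q,\cdot)$ is nonempty since $I=\psi R_e$ with $\psi=I$ fixing $e$; using the canonical factorisation $\alpha=\psi R_{e\alpha}$ with $\psi:e\mapsto e$, closure under composition is obtained by multiplying two maps written over a common base and re-collecting the result in normal form, while closure under inversion is, by the previous paragraph, exactly equivalent to the stated $\sim$-condition. This yields the ``if and only if''. For the ``hence'', I would push the unique decomposition $\alpha=\psi R_{e\alpha}$ through: the assignment $\alpha\mapsto\psi$ maps $TBS_1(Q,\cdot)$ onto $T_1(Q)$ and is compatible with the group operations modulo right translations, so that $T_1(Q)$ inherits the subgroup property and $T_1(Q)\le SYM(Q)$. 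For part~2 I would use that $TBS_2(Q,\cdot)=TBS_1(Q,\cdot)\cap BS(Q,\cdot)$ by definition and that $BS(Q,\cdot)\le SYM(Q)$ by Theorem~\ref{a1:3l}; once part~1 gives $TBS_1(Q,\cdot)\le SYM(Q)$ under the condition, the intersection of two subgroups is a subgroup, so $TBS_2(Q,\cdot)\le BS(Q,\cdot)$, with the equivalence inherited verbatim because the condition is the same. Finally, Theorem~\ref{a1} identifies the base $\psi$ of any $\alpha=\psi R_x\in BS(Q,\cdot)$ (with $\psi:e\mapsto e$) as a unique pseudo-automorphism, giving $T_2(Q)\subseteq PS(Q,\cdot)$, and the same factorisation argument as for $T_1$ upgrades this containment to $T_2(Q)\le PS(Q,\cdot)$.

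The step I expect to be the main obstacle is making the inversion clause rigorous: one must show that ``$\alpha^{-1}\sim\beta^{-1}$ for all twins'' is genuinely the same as ``$TBS_1(Q,\cdot)$ is closed under inverses'', which forces care about the common-base convention of Definition~\ref{a1.1:3} and a use of Lemma~\ref{a4.2}(2) in exactly the right direction. A secondary difficulty will be the bookkeeping in the two ``hence'' clauses, namely tracking how the companion of each base map $\psi$ (supplied by Theorem~\ref{a1}) behaves under composition, so that $\alpha\mapsto\psi$ is seen to land inside $PS(Q,\cdot)$ and not merely inside $SYM(Q)$.
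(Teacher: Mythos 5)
Your proposal has two genuine gaps, and both trace to the same source: you end up proving a different lemma from the one stated. Lemma~\ref{a4.1} is stated for an \emph{arbitrary} loop $(Q,\cdot)$, but your two key tools are only available in a $\sigma$-generalised Bol loop. Your ``dictionary'' step, converting ``$\alpha^{-1}\sim\beta^{-1}$ for all twins'' into closure of $TBS_1(Q,\cdot)$ under inversion, is obtained by citing Lemma~\ref{a4.2}(2), and your route to $T_2(Q)\subseteq PS(Q,\cdot)$ is obtained by citing Theorem~\ref{a1}; both of those results assume the generalised Bol identity (they rest on $R_{x}R_{\sigma(x)}=R_{x\cdot\sigma(x)}$, i.e. Theorem~\ref{1}(3)), and you explicitly admit leaning on that structure. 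The paper's own proof never does: part 1 is pure permutation algebra (nonemptiness via $I=IR_e$, then the one-step test applied directly), and in part 2 the containment $T_2(Q)\subseteq PS(Q,\cdot)$ comes from a direct computation, not from Theorem~\ref{a1}: for $\psi\in T_2(Q)$ with $\alpha=\psi R_x\in TBS_2(Q,\cdot)\subseteq BS(Q,\cdot)$, one takes $g=x$ and $f=e$ in $(\alpha R_g^{-1},\alpha L_f^{-1},\alpha)\in AUT(Q,\cdot)$ to get $(\psi,\psi R_x,\psi R_x)\in AUT(Q,\cdot)$, so that $\psi$ is a pseudo-automorphism with companion $x$ --- no Bol identity needed, and no companion of the form $xg^{-1}\cdot\sigma(x)$ appears.

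The second gap is your split of the one-step test into ``closure under composition, which is automatic by re-collecting into normal form'' plus ``closure under inversion, which is what $\sim$ controls.'' The re-collecting step is not automatic; it is the entire content of the lemma. Writing $\alpha_1=\psi_1R_{x_1}$, $\alpha_2=\psi_2R_{x_2}$ with twins $\beta_1=\psi_1R_{y_1}$, $\beta_2=\psi_2R_{y_2}$, one has $\alpha_1\alpha_2^{-1}=\psi_1R_{x_1}R_{x_2}^{-1}\psi_2^{-1}$, and to put this into the form $(\psi_1\psi_2^{-1})R_x$ you must commute $R_{x_1}R_{x_2}^{-1}$ past $\psi_2^{-1}$, i.e. you need $\psi_2R_{x_1}=R_x\psi_2R_{x_2}$ for some $x\in Q$; since $\alpha_2=\psi_2R_{x_2}$ and (with $x_1=y_2$) $\psi_2R_{x_1}=\beta_2$, this says precisely $\beta_2=R_x\alpha_2$, which is the condition $\alpha_2^{-1}\sim\beta_2^{-1}$ that the lemma asserts to be equivalent to the subgroup property. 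That is exactly how the paper's chain of equivalences runs, for quotients $\alpha_1\alpha_2^{-1}$ rather than for inverses alone. So the $\sim$-condition is not the ``inversion half'' of the subgroup test; it is the obstruction to the normal form you treat as free, and your argument as written would ``prove'' closure under composition even in situations where it fails.
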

\begin{proof}
\begin{enumerate}
\item $TBS_1(Q,\cdot)\ne\emptyset$ because $I=IR_e$ and $I^{-1}=I^{-1}R_e$ and so, $I,I^{-1}\in TBS_1(Q,\cdot)$.
Let $\alpha_1,\alpha_2\in TBS_1(Q,\cdot)$ and let $\psi_1,\psi_2\in SYM(Q)$. Then, there exist $x_1,y_1,x_2,y_2\in Q$, $\psi_1,\psi_2\in SYM(Q)$ and  $\beta_1,\beta_2\in SYM(Q)$ such that $\alpha_1=\psi_1R_{x_1},~\beta_1=\psi_1R_{y_1}$ and $\alpha_2=\psi_2R_{x_2},~\beta_2=\psi_2R_{y_2}$. So, $\alpha_1\alpha_2^{-1}=\psi_1R_{x_1}R_{x_2}^{-1}\psi_2^{-1}$.
Now, $\alpha_1\alpha_2^{-1}\in TBS_1(Q,\cdot)\Leftrightarrow \alpha_1\alpha_2^{-1}=\psi R_x$ and $\beta_1\beta_2^{-1}=\psi R_y$ for some $x,y\in Q$ and $\psi\in SYM(Q)$. Taking $\psi =\psi_1\psi_2^{-1}$ and $x=x_2$, then $\alpha_1\alpha_2^{-1}=\psi_1\psi_2^{-1}R_x\Leftrightarrow \psi_1R_{x_1}R_{x_2}^{-1}\psi_2^{-1}=\psi_1\psi_2^{-1}R_x\Leftrightarrow \psi_2R_{x_1}=R_x\psi_2R_{x_2}\Leftrightarrow\psi_2R_{x_1}=R_x\alpha_2\Leftrightarrow\psi_2R_{y_2}=R_x\alpha_2~\textrm{with}~x_1=y_2\Leftrightarrow
\beta_2=R_x\alpha_2\Leftrightarrow\alpha_2^{-1}\sim\beta_2^{-1}$. Thus, $TBS_1(Q,\cdot)\le SYM(Q)$ if and only if $\alpha_2^{-1}\sim\beta_2^{-1}$.

Assuming that $TBS_1(Q,\cdot)\le SYM(Q)$, then $T_1(Q,\cdot)\ne\emptyset$ because $I\in T_1(Q,\cdot)$. As earlier shown, $\alpha_1\alpha_2^{-1}=\psi_1\psi_2^{-1}R_x$ for any $\psi_1,\psi_2\in T_1(Q,\cdot)$ and $\alpha_1,\alpha_2\in TBS_1(Q,\cdot)$. So, $T_1(Q,\cdot)\le SYM(Q)$.
\item $TBS_2(Q,\cdot)\ne\emptyset$ because $TBS_1(Q,\cdot)\ne\emptyset$ and $BS(Q,\cdot)\ne\emptyset$. For any $\alpha_1,\alpha_2\in TBS_2(Q,\cdot)$, $\alpha_1\alpha_2^{-1}\in BS(Q,\cdot)$. So, $\alpha_1\alpha_2^{-1}\in TBS_2(Q,\cdot)\Leftrightarrow \alpha_1\alpha_2^{-1}\in TBS_1(Q,\cdot)\Leftrightarrow\alpha_2^{-1}\sim\beta_2^{-1}$. $\therefore~TBS_2(Q,\cdot)\le BS(Q,\cdot )\Leftrightarrow\alpha_2^{-1}\sim\beta_2^{-1}$.

Assuming that $TBS_2(Q,\cdot)\le BS(Q,\cdot )$, then $T_2(Q,\cdot)\ne\emptyset$ because $I\in T_2(Q,\cdot)$.
Let $\psi\in T_2(Q,\cdot)$, then there exists $\alpha\in  BS(Q,\cdot )$, and $\alpha =\psi R_x\in TBS_2(Q,\cdot)$ for some $x\in Q$.
Recall that $\alpha\in  BS(Q,\cdot )$ implies there exist $f,g\in Q$ such that $(\alpha R_g^{-1},\alpha L_f^{-1},\alpha )\in AUT(Q,\cdot )$.
Taking $g=x$ and $f=e$, $(\alpha R_g^{-1},\alpha L_f^{-1},\alpha )=(\psi R_xR_x^{-1},\psi R_xL_e^{-1},\psi R_x )=(\psi,\psi R_x,\psi R_x )\in AUT(Q,\cdot )\Rightarrow\alpha\in PS(Q,\cdot )$. Thus, $T_2(Q,\cdot)\subseteq PS(Q,\cdot )$.

Let $\psi_1,\psi_2\in T_2(Q,\cdot)$, then there exist $\alpha_1,\alpha_2\in TBS_2(Q,\cdot)$ such that $\alpha_1=\psi_1R_{x_1}$ and $\alpha_2=\psi_2R_{x_2}$. In fact, $\alpha_1,\alpha_2\in TBS_1(Q,\cdot)$ and so, following 1., $\alpha_1\alpha_2^{-1}=\psi_1\psi_2^{-1}R_y\in TBS_1(Q,\cdot)$ for some $y\in Q$. This implies that $\alpha_1\alpha_2^{-1}=\psi_1\psi_2^{-1}R_y\in TBS_2(Q,\cdot)$ for some $y\in Q$ and so $\psi_1\psi_2^{-1}\in T_2(Q,\cdot)$. Thus, $T_2(Q,\cdot)\le PS(Q,\cdot )$.\qed
\end{enumerate}
\end{proof}

\paragraph{}In what follows, in a loop $(Q,\cdot)$ with A-holomorph $(H,\circ )$ where $H=A(Q)\times Q$, we shall replace $A(Q)$ by $T_3(Q)$ whenever $T_3(Q)\le AUM(Q,\cdot)$ and then call $(H,\circ )$ a T${}_3$-holomorph of $(Q,\cdot)$.
\begin{mycor}\label{t1}
Let $(Q,\cdot)$ be a loop with a self-map $\sigma~:~x\mapsto (xg^{-1})^{-1}$ for all $x\in Q$ and some $g\in Q$
 and let $(H,\circ )$ be the T${}_3$-holomorph of $(Q,\cdot)$ with a self-map $\sigma'$ such that $\sigma'~:~(\alpha,x)\mapsto \big(\alpha,(xg^{-1})^{-1}\big)$ for all $(\alpha,x)\in H$. Then, $(H,\circ)$ is a $\sigma'$-generalised
Bol loop if $(Q,\cdot)$ is a $\alpha^{-1}\sigma\gamma^{-1}$-generalised Bol loop for any $\alpha,\gamma\in T_3$.
\end{mycor}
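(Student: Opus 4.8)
The plan is to recognize this corollary as an immediate specialization of Corollary~\ref{4.2}. First I would note that the self-map $\sigma'~:~(\alpha,x)\mapsto\big(\alpha,(xg^{-1})^{-1}\big)$ is, under the standing hypothesis $\sigma~:~x\mapsto(xg^{-1})^{-1}$, exactly the map $(\alpha,x)\mapsto(\alpha,\sigma(x))$. Hence $\sigma'$ has precisely the form demanded in the statement of Corollary~\ref{4.2}, and nothing further need be said about its definition.

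Next I would appeal to the convention fixed immediately before this corollary: a $T_3$-holomorph is simply the A-holomorph $(H,\circ)=A(Q)\times Q$ in the case $A(Q)=T_3(Q)$, which is available precisely when $T_3(Q)\le AUM(Q,\cdot)$. Since $T_3(Q)$ is then a bona fide group of automorphisms of $(Q,\cdot)$, every hypothesis of Corollary~\ref{4.2} holds verbatim with $A(Q)$ replaced throughout by $T_3(Q)$.

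It then remains only to apply the forward (``if'') direction of Corollary~\ref{4.2}: specializing $A(Q)=T_3(Q)$, the assumption that $(Q,\cdot)$ is an $\alpha^{-1}\sigma\gamma^{-1}$-generalised Bol loop for all $\alpha,\gamma\in T_3$ yields at once that $(H,\circ)$ is a $\sigma'$-generalised Bol loop. The underlying computation is already carried out in Theorem~\ref{4.1}, whose autotopism criterion $\big(R_x^{-1},L_xR_{[\sigma(x\gamma^{-1})]\alpha^{-1}},R_{[\sigma(x\gamma^{-1})]\alpha^{-1}}\big)\in AUT(Q,\cdot)$ is precisely the statement that $(Q,\cdot)$ is $\alpha^{-1}\sigma\gamma^{-1}$-generalised Bol. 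I therefore expect no genuine obstacle here: the only points needing care are the identification of $\sigma'$ with $(\alpha,x)\mapsto(\alpha,\sigma(x))$ and the legitimacy of the replacement of $A(Q)$ by $T_3(Q)$, both of which are immediate from the hypotheses. The single-implication phrasing (rather than the biconditional of Corollary~\ref{4.2}) is harmless, since we invoke only the direction that that corollary supplies directly.
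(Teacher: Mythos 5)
Your core reduction --- identifying $\sigma'$ with $(\alpha,x)\mapsto(\alpha,\sigma(x))$ and then invoking the forward direction of Corollary~\ref{4.2} with $A(Q)$ specialized to $T_3(Q)$ --- is the same main step the paper takes, and that part is sound. The genuine gap is in your second paragraph, where you declare the ``legitimacy of the replacement of $A(Q)$ by $T_3(Q)$'' to be immediate from the naming convention. The convention preceding the corollary only licenses one to \emph{speak} of a T${}_3$-holomorph when $T_3(Q)\le AUM(Q,\cdot)$; it does not make that inclusion true, and taking it as an unstated hypothesis reduces the corollary to an empty relabelling of Corollary~\ref{4.2}. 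By definition, an element of $T_3(Q)$ is merely a permutation $\psi\in SYM(Q)$ fixing $e$ that arises from a twin special map $\psi R_x\in TBS_2(Q,\cdot)\subseteq BS(Q,\cdot)$; a priori such $\psi$ need not be an automorphism of $(Q,\cdot)$, so Corollary~\ref{4.2}, which requires a genuine group of automorphisms to form the holomorph, cannot yet be applied. Establishing this inclusion is exactly the content the corollary is meant to carry --- compare the remark after Corollary~\ref{t4}, which stresses that the holomorph here is ``built on the group of automorphisms gotten via the group of twin mappings.''

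The paper closes this gap with the two results your proposal never uses: Lemma~\ref{a4.1} and Corollary~\ref{a2}. Taking $\alpha=\gamma=I$ in the hypothesis, $(Q,\cdot)$ is a $\sigma$-generalised Bol loop with $\sigma:x\mapsto(xg^{-1})^{-1}$. Lemma~\ref{a4.1}, combined with the condition built into the definition of $T_3(Q)$, gives that $T_2(Q,\cdot)$ (hence $T_3(Q)$) is a group and that each $\psi$ in it is a pseudo-automorphism with companion $xg^{-1}\cdot\sigma(x)$; Corollary~\ref{a2} then shows that for this particular $\sigma$ the companion collapses to $e$ (since $xg^{-1}\cdot(xg^{-1})^{-1}=e$), so every such $\psi$ is in fact an automorphism. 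This yields $T_3(Q)\le AUM(Q,\cdot)$, making the T${}_3$-holomorph a bona fide A-holomorph, and only at that point does your application of Corollary~\ref{4.2} go through verbatim. Insert this derivation in place of the appeal to the convention and your argument matches the paper's.
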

\begin{proof}
This is proved with Lemma~\ref{a4.1}, Corollary~\ref{4.2} and Corollary~\ref{a2}.
\qed
\end{proof}

\begin{mycor}\label{t2}
Let $(Q,\cdot)$ be a loop with a self-map $\sigma~:~x\mapsto (xg^{-1})^{-1}$ for all $x\in Q$ and some $g\in Q$
 and let $(H,\circ )$ be the T${}_3$-holomorph of $(Q,\cdot)$ with a self-map $\sigma'$ such that $\sigma'~:~(\alpha,x)\mapsto \Big(\alpha,\big[\alpha\gamma(x)(\alpha(g))^{-1}\big]^{-1}\Big)$ for all $(\alpha,x)\in H$ and any $\gamma\in T_3$.
If $(Q,\cdot)$ is a $\sigma$-generalised Bol
loop, then $(H,\circ)$ is a $\sigma'$-generalised Bol loop.
\end{mycor}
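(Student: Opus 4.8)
The plan is to recognise this corollary as the specialisation of Theorem~\ref{5} to the self-map $\sigma:x\mapsto(xg^{-1})^{-1}$, once the $T_3$-holomorph has been certified to be a genuine A-holomorph. So the first thing I would do is justify that $T_3(Q)\le AUM(Q,\cdot)$, which is precisely the hypothesis under which the $T_3$-holomorph is defined (see the remark preceding Corollary~\ref{t1}). Since $(Q,\cdot)$ is a $\sigma$-generalised Bol loop with $\sigma:x\mapsto(xg^{-1})^{-1}$, every generator $\psi\in T_2(Q)$ arises from some $\alpha=\psi R_x\in TBS_2(Q,\cdot)\subseteq BS(Q,\cdot)$ with $\psi:e\mapsto e$, so Corollary~\ref{a2} forces $\psi\in AUM(Q,\cdot)$; hence $T_2(Q)\subseteq AUM(Q,\cdot)$ and in particular $T_3(Q)\subseteq AUM(Q,\cdot)$. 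Lemma~\ref{a4.1} then supplies the group structure, since the twin condition $\alpha^{-1}\sim\beta^{-1}$ built into the definition of $T_3(Q)$ is exactly what closes it; thus $T_3(Q)\le AUM(Q,\cdot)$ and the $T_3$-holomorph is a legitimate A-holomorph with $A(Q)=T_3(Q)$.

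The second, and essentially the only computational, step is to verify that the self-map $\sigma'$ appearing here is identical to the one occurring in Theorem~\ref{5}. For $\alpha\in T_3(Q)\le AUM(Q,\cdot)$ and the fixed $\gamma\in T_3(Q)$, I would expand $\alpha\sigma\gamma(x)=\alpha\big(\sigma(\gamma(x))\big)=\alpha\big((\gamma(x)g^{-1})^{-1}\big)$. Because $\alpha$ is an automorphism of the two-sided-inverse loop $(Q,\cdot)$ (two-sidedness coming from Theorem~\ref{1}(2), giving $x^{\lambda}=x^{\rho}=x^{-1}$ and hence $\alpha(w^{-1})=(\alpha(w))^{-1}$), it commutes with the inverse map and distributes over the product, so $\alpha\big((\gamma(x)g^{-1})^{-1}\big)=\big(\alpha(\gamma(x))\cdot\alpha(g^{-1})\big)^{-1}=\big(\alpha\gamma(x)\cdot(\alpha(g))^{-1}\big)^{-1}$, which is exactly the second coordinate of the $\sigma'$ in the statement. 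Hence the $\sigma'$ written here coincides with the $\sigma'$ of Theorem~\ref{5}.

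With these two points in hand I would simply invoke Theorem~\ref{5}, now read with $A(Q)=T_3(Q)$: since $(Q,\cdot)$ is a $\sigma$-generalised Bol loop, the A-holomorph $(H,\circ)$ is a $\sigma'$-generalised Bol loop, which is the assertion (Theorem~\ref{5} in fact yields the full ``if and only if'', so the one-directional ``if'' claimed here is immediate). I expect the only delicate point to be the bookkeeping in the first paragraph: one must check that the hypotheses of Corollary~\ref{a2} (namely $\alpha=\psi R_x$ with $\psi$ fixing $e$ and $\alpha\in BS(Q,\cdot)$) are met for every generator of $T_3(Q)$, and that Lemma~\ref{a4.1} is applied with the twin condition that is part of the definition of $T_3(Q)$. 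The automorphism computation in the second paragraph is routine once the two-sided inverse property is invoked.
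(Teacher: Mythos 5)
Your proposal is correct and follows the same route as the paper, whose proof is simply the citation of Lemma~\ref{a4.1}, Theorem~\ref{5} and Corollary~\ref{a2}; you use exactly these three ingredients, merely making explicit the details the paper leaves implicit (that Corollary~\ref{a2} and Lemma~\ref{a4.1} certify $T_3(Q)\le AUM(Q,\cdot)$ so the T${}_3$-holomorph is a genuine A-holomorph, and that $\alpha\sigma\gamma(x)=\big[\alpha\gamma(x)\cdot(\alpha(g))^{-1}\big]^{-1}$ identifies the stated $\sigma'$ with that of Theorem~\ref{5}). No changes needed.
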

\begin{proof}
This is proved with Lemma~\ref{a4.1}, Theorem~\ref{5} and Corollary~\ref{a2}.
\qed
\end{proof}

\begin{mycor}\label{t3}
Let $(Q,\cdot)$ be a loop with a self-map $\sigma~:~x\mapsto (xg^{-1})^{-1}$ for all $x\in Q$ and some $g\in Q$
 and let $(H,\circ )$ be the T${}_3$-holomorph of $(Q,\cdot)$ with a self-map $\sigma'$ such that $\sigma'~:~(\alpha,x)\mapsto \big(\alpha,(xg^{-1})^{-1}\big)$ for all $(\alpha,x)\in H$. If for any $\gamma\in T_3$, $(Q,\cdot)$ is a $\sigma\alpha\gamma^{-1}$-generalised flexible-Bol loop, then $(H,\circ)$ is a $\sigma'$-generalised
flexible-Bol loop.
\end{mycor}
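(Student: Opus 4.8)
The plan is to derive this corollary as the flexible-Bol analogue of Corollary~\ref{t1}, obtaining the conclusion from the forward implication of Theorem~\ref{6} once the T${}_3$-holomorph has been certified as legitimate. The crucial formal observation is that the prescribed self-map $\sigma'~:~(\alpha,x)\mapsto\big(\alpha,(xg^{-1})^{-1}\big)$ is exactly of the shape $\sigma'(\alpha,x)=(\alpha,\sigma(x))$ required in Theorem~\ref{6}, because by hypothesis $\sigma(x)=(xg^{-1})^{-1}$. Thus, as soon as we know that the automorphism group used to build $(H,\circ)$ is $T_3(Q)$ and that $T_3(Q)\le AUM(Q,\cdot)$, Theorem~\ref{6} applies word for word with $A(Q)=T_3(Q)$.

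First I would verify that the T${}_3$-holomorph is well defined, that is, that $T_3(Q)\le AUM(Q,\cdot)$, which is the hypothesis under which the T${}_3$-holomorph was declared meaningful in the paragraph preceding Corollary~\ref{t1}. Here the special form $\sigma~:~x\mapsto(xg^{-1})^{-1}$ does the work. Taking $\alpha=\gamma=I$ in the hypothesis (note $I\in T_3(Q)$) specialises it to the statement that $(Q,\cdot)$ is a $\sigma$-generalised flexible-Bol loop, hence in particular a $\sigma$-generalised Bol loop, so Corollary~\ref{a2} applies: every $\alpha\in BS(Q,\cdot)$ factors as $\alpha=\psi R_x$ with $\psi\in AUM(Q,\cdot)$. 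Consequently each element of $T_2(Q)$ is an automorphism; combined with Lemma~\ref{a4.1}(2), which makes $T_2(Q)$ a group of pseudo-automorphisms and hence also its subset $T_3(Q)$ cut out by the twin condition, this yields $T_3(Q)\le AUM(Q,\cdot)$, an automorphism being simply a pseudo-automorphism with trivial companion. I expect this certification of the holomorph to be the only step carrying genuine content; everything after it is a direct citation.

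With $A(Q)=T_3(Q)\le AUM(Q,\cdot)$ secured, I would invoke Theorem~\ref{6} directly. For the first component $\alpha\in T_3(Q)$ of an element $(\alpha,x)\in H$ and for any $\gamma\in T_3(Q)$, Theorem~\ref{6} asserts the equivalence: $(Q,\cdot)$ is a $\sigma\alpha\gamma^{-1}$-generalised flexible-Bol loop if and only if $(H,\circ)$ is a $\sigma'$-generalised flexible-Bol loop. Reading off the stated hypothesis, namely that $(Q,\cdot)$ is a $\sigma\alpha\gamma^{-1}$-generalised flexible-Bol loop for the relevant $\alpha,\gamma\in T_3(Q)$, and applying the forward implication, yields that $(H,\circ)$ is a $\sigma'$-generalised flexible-Bol loop, as claimed. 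In sum, the argument uses Lemma~\ref{a4.1} and Corollary~\ref{a2} to legitimise the T${}_3$-holomorph, followed by Theorem~\ref{6} to transfer the generalised flexible-Bol property; the sole obstacle is the well-definedness check, while the flexible half is handled by the same $xy\cdot\sigma(x)=x\cdot y\sigma(x)$ bookkeeping already carried out inside Theorem~\ref{6}.
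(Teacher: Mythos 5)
Your proposal is correct and follows essentially the same route as the paper, whose proof is precisely the citation of Lemma~\ref{a4.1}, Theorem~\ref{6} and Corollary~\ref{a2}: you use the hypothesis (with $\alpha=\gamma=I$) together with Corollary~\ref{a2} and Lemma~\ref{a4.1} to certify that $T_3(Q)\le AUM(Q,\cdot)$ so the T${}_3$-holomorph is a genuine A-holomorph, and then apply the forward implication of Theorem~\ref{6} with $A(Q)=T_3(Q)$. Your write-up supplies the combinatorial details the paper leaves implicit, but no new idea or different decomposition is involved.
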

\begin{proof}
This is proved with Lemma~\ref{a4.1}, Theorem~\ref{6} and Corollary~\ref{a2}.
\qed
\end{proof}

\begin{mycor}\label{t4}
Let $(Q,\cdot)$ be a loop with a self-map $\sigma~:~x\mapsto (xg^{-1})^{-1}$ for all $x\in Q$ and some $g\in Q$
 and let $(H,\circ )$ be the T${}_3$-holomorph of $(Q,\cdot)$ with a self-map $\sigma'$ such that $\sigma'~:~(\alpha,x)\mapsto \Big(\alpha,\big[\big(\gamma\alpha^{-1}(x)\big)g^{-1}\big]^{-1}\Big)$ for all $(\alpha,x)\in H$ and any $\gamma\in T_3$.
If $(Q,\cdot)$ is a $\sigma$-generalised flexible-Bol loop, then $(H,\circ)$ is a $\sigma'$-generalised flexible-Bol loop.
 \end{mycor}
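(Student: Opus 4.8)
The plan is to recognize Corollary~\ref{t4} as the specialization of Theorem~\ref{7} to the self-map $\sigma : x \mapsto (xg^{-1})^{-1}$, carried out over the automorphism group $T_3(Q)$ in place of a generic $A(Q)$; two reductions are needed. The first is a one-line substitution identifying the displayed $\sigma'$ with the one Theorem~\ref{7} prescribes. That theorem requires $\sigma' : (\alpha,x) \mapsto (\alpha, \sigma\gamma\alpha^{-1}(x))$, and writing $w = \gamma\alpha^{-1}(x)$ with $\sigma(w) = (wg^{-1})^{-1}$ gives $\sigma\gamma\alpha^{-1}(x) = \big[(\gamma\alpha^{-1}(x))g^{-1}\big]^{-1}$, which is exactly the second coordinate of the $\sigma'$ in the statement. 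No further manipulation of $\gamma$ or $\alpha$ is required, since $\sigma$ is the outermost map here, so the two self-maps coincide the moment $\sigma : x \mapsto (xg^{-1})^{-1}$ is imposed.

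The second, substantive reduction is to verify that $T_3(Q) \le AUM(Q,\cdot)$, since by the convention fixed just before Corollary~\ref{t1} the T$_3$-holomorph is defined only under this inclusion. Here Corollary~\ref{a2} applies verbatim: because $\sigma : x \mapsto (xg^{-1})^{-1}$ and $(Q,\cdot)$ is a $\sigma$-generalised Bol loop (being in particular $\sigma$-generalised flexible-Bol), every $\alpha = \psi R_x \in BS(Q,\cdot)$ with $\psi : e \mapsto e$ forces $\psi \in AUM(Q,\cdot)$; hence the generators $\psi$ of $T_2(Q)$, and therefore of $T_3(Q) \subseteq T_2(Q)$, are automorphisms. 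The twin condition ``$\alpha^{-1} \sim \beta^{-1}$ for twin maps $\alpha,\beta$'' built into the definition of $T_3(Q)$ is precisely the hypothesis under which Lemma~\ref{a4.1} closes $T_2(Q)$ into a subgroup of $PS(Q,\cdot)$; combined with the automorphy just noted, this upgrades the inclusion to the required $T_3(Q) \le AUM(Q,\cdot)$.

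With both reductions in hand, Theorem~\ref{7}, now read with $A(Q)$ taken to be $T_3(Q)$ and with the fixed $\gamma \in T_3$, delivers the equivalence that $(Q,\cdot)$ is $\sigma$-generalised flexible-Bol if and only if $(H,\circ)$ is $\sigma'$-generalised flexible-Bol; the hypothesis on $(Q,\cdot)$ then yields the conclusion. The main obstacle is the compatibility check hidden in the second paragraph: one must confirm that the products $\psi_1\psi_2^{-1}$ which Lemma~\ref{a4.1} places in $T_3(Q)$ remain simultaneously automorphisms (via Corollary~\ref{a2}) and twin-compatible, so that $T_3(Q)$ is closed as a subgroup of $AUM(Q,\cdot)$ and not merely of $SYM(Q)$. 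Everything else reduces to the substitution recorded above.
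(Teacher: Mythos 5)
Your proposal is correct and follows exactly the route the paper takes: its proof of Corollary~\ref{t4} is the one-line citation ``This is proved with Lemma~\ref{a4.1}, Theorem~\ref{7} and Corollary~\ref{a2}'', and your three steps --- matching the displayed $\sigma'$ to the one in Theorem~\ref{7} under $\sigma: x\mapsto (xg^{-1})^{-1}$, invoking Corollary~\ref{a2} (noting flexible-Bol implies Bol) together with Lemma~\ref{a4.1} to get $T_3(Q)\le AUM(Q,\cdot)$ so the T${}_3$-holomorph is a legitimate A-holomorph, then applying Theorem~\ref{7} with $A(Q)=T_3(Q)$ --- are precisely the intended assembly of those ingredients. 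No discrepancy to report; your write-up simply makes explicit what the paper leaves implicit.
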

\begin{proof}
This is proved with Lemma~\ref{a4.1}, Theorem~\ref{7} and Corollary~\ref{a2}.
\qed
\end{proof}

\begin{myrem}
In Corollary~\ref{t1},~\ref{t2},~\ref{t3},~\ref{t4}, the holomorph of a loop is built on the group of automorphisms gotten via the group of twin mappings.
\end{myrem}

\end{document}